\newcommand*\diff{\mathop{}\!\mathrm{d}}
\newcommand{\R}{\mathbb{R}}
\newcommand{\T}{\top}
\newtheorem{lemma}{Lemma}
\newtheorem{proposition}{Proposition}
\title{Controllability analysis and optimal control of biomass drying with reduced order models} 
\author[a]{Marc Oliver Berner} 
\author[b]{Viktor Scherer} 
\author[a]{Martin M\"onnigmann\thanks{Corresponding author. E-mail address: martin.moennigmann@rub.de (M. M\"onnigmann).}} 
\affil[a]{Ruhr-Universit\"at Bochum, Automatic Control and Systems Theory, 
Universit\"atsstr. 150, 44780 Bochum, Germany}
\affil[b]{Ruhr-Universit\"at Bochum, Energy Plant Technology,\\ Universit\"atsstr. 150, 44780 Bochum, Germany} 
\date{}
\begin{document}
\maketitle
\vspace{-.5cm}
\begin{abstract}
Complex industrial processes such as the drying of combustible biomass can be modeled with partial differential equations. Due to their complexity, it is not straightforward to use these models for the analysis of system properties or for solving optimal control problems. We show reduced order models can be derived and used for these purposes for industrial drying processes.
\end{abstract}

\section{Introduction}

Industrial drying processes, such as the drying of wood chips, contribute considerably to the energy consumption of the production of renewable fuels. It is obviously interesting to find energy optimal modes of operation for these processes. 
Dynamic models are useful tools for this task. Whenever the behavior inside the biomass particles needs to be resolved, partial differential equations (PDEs) are required.
In \cite{Sudbrock2015}, for example, the drying of wood chips in rotary dryers is modeled by coupling a discrete element method (DEM) simulation with computational fluid dynamic simulations. The behavior inside the wood particles is considered by a PDE solver that is embedded into the DEM simulation.
Since a direct analysis and control design with infinite-dimensional models is difficult and often not practical for models of industrial processes, it is an option to apply model reduction methods first and to proceed with established methods for finite-dimensional systems.

Reduced order models (ROM) based on proper orthogonal decomposition (POD) and Galerkin projection are suitable for the analysis and optimal control of 
the distributed parameter systems considered here \cite{Moore1981,Jansen2017, Cao2015, Keiper2018, Nagarajan2018, Studinger2013}. 
We showed in \cite{Scherer2016,Berner2017} that models derived with these reduction techniques
can be used to describe
the dynamic drying process inside a single anisotropic wood chip. 
A model reduction of the wood chip model used here was already presented in~\cite{Berner2017}. 
We summarize this reduction as needed for the present paper.
In contrast to~\cite{Berner2017} it is not the purpose of the present paper to describe the model reduction, but to use the resulting reduced model 
for establishing the controllability of the drying process, and to show that the reduced model can be used to find optimal heating time-series by solving an optimal control problem.
We use the empirical framework for nonlinear controllability analysis proposed in~\cite{Hahn2003} and~\cite{Lall1999}, which is based on covariance matrices. More detailed pointers to the literature are given in section~\ref{sec:problemformulation}.

Section~\ref{sec:WoodChipModelling} introduces the wood chip drying process of interest. The control problem and the required nonlinear controllability tools are presented in section~\ref{sec:problemformulation}. We derive a ROM and reduce the computational effort for the controllability analysis in section~\ref{sec:solutionformulation}. The application to the wood chip drying process is presented in section~\ref{sec:ApplicationToWoodChip}. We analyze controllability aspects and the effect of ROM of various orders. Optimal heating profiles are derived with numerical optimal control methods in section~\ref{sec:OCP}. A short conclusion and an outlook can be found in section~\ref{sec:Outlook}.

\section{Modelling of wood chip drying processes}\label{sec:WoodChipModelling}

The drying of biomass in rotary dryers can be modeled by coupling the motion and physical interaction of wood particles inside the drum with the inner particle heat and water diffusion \cite{Sudbrock2014, Sudbrock2015}. The drying process of a single wood chip is characterized by the transient temperature and moisture distribution inside the particle. It must be resolved on the single particle scale due to the size and anisotropy of the wood material \cite{Sudbrock2015,Scherer2016}. A typical size of a wood chip is $\unit[10]{mm}\times\unit[20]{mm}\times\unit[5]{mm}$.

We assume that the drying of a wood chip occurs due to water evaporation at the surface. It depends on the temperature and moisture distribution inside the wood chip and the ambient conditions. 
Let $T(y,t)$ and $x(y,t)$ be the temperature and moisture, respectively, at time $t$ and location $y \in \Omega$, where $\Omega \subset \R^3$ is the volume of the wood chip. Modeling the drying process with Fourier's law of heat conduction and Fick's law of diffusion yields
\begin{subequations}
\label{eqn:PDE}
\begin{align}
\frac{\partial x(y,t)}{\partial t} &= \nabla\Big( {\delta}\big(T(y,t)\big) \nabla x(y,t)\Big) \label{eqn:PDEX}\\
\frac{\partial T(y,t)}{\partial t} &= s^{-1}\big(x(y,t)\big) \nabla \Big( {\lambda}\big(x(y,t)\big) \nabla T(y,t) \Big).\label{eqn:PDET}
\end{align}
\end{subequations}
The material parameters, i.e., the volumetric heat capacity $s(x(y,t))$, the diffusion coefficients $\lambda(x(y,t))$ and $\delta(T(y,t))$ depend on the local temperature or moisture at spatial location $y$ and time $t$. They are stated in appendix A.
Note that $\lambda(x(y, t))\in\R^{3 \times 3}$ and $\delta(T(y, t))\in\R^{3 \times 3}$ due to the anisotropy of the wood. 

The inner particle moisture and temperature distributions are affected by heat and mass fluxes across the particle surface. The boundary conditions for \eqref{eqn:PDE} on the particle surface $\partial \Omega$ with associated normal vector $n$ read

\begin{align}
\begin{aligned}
\label{eqn:PDEBC}
 n^\T \big( \delta(T(y,t))\nabla x(y,t) \big)  \bigg \vert_{\partial \Omega} &= \Gamma_{x}\big(x(y,t),T(y,t)\big)\\
 n^\T \big(\lambda(x(y,t)) \nabla T(y,t) \big)  \bigg \vert_{\partial \Omega}  &= \Gamma_{T}\big(x(y,t),T(y,t)\big) + \alpha T_{\infty}. \end{aligned} 
\end{align}
with
\begin{align}\label{eqn:PDEBCFunctions}
\begin{aligned}
\Gamma_x\big(x,T\big) &= \tfrac{\beta}{\rho_\text{d}} \Big(\rho_\infty - \rho \big( x,T\big) \Big)  \\
\Gamma_T\big(x,T\big) &= -\alpha T + \Delta h_\textrm{ads}\big( x,T\big)  \beta \Big(\rho_\infty - \rho \big( x,T\big) \Big)
\end{aligned} 
\end{align}
\cite[section 2.1]{Scherer2016}, where $x$ and $T$ are short for $x(y, t)$ and $T(y, t)$, respectively, in~\eqref{eqn:PDEBCFunctions}.
The boundary conditions \eqref{eqn:PDEBC} depend on the ambient temperature $T_\infty$, the ambient absolute humidity $\rho_\infty$, the local surface temperature $T(y,t)$, the local absolute humidity on the surface $\rho \big( x(y,t),T(y,t)\big)$, the enthalpy of adsorption $\Delta h_\textrm{ads}\big( x(y,t),T(y,t)\big)$, 
the heat transfer coefficient $\beta$, the mass transfer coefficient $\alpha$, and the density of dry wood $\rho_\text{d}$. 
Note that the boundary conditions are nonlinear, because $\Delta h_\textrm{ads}$ and $\rho$ are nonlinear functions, which we provide in appendix A.

Equations \eqref{eqn:PDE} are solved for initial conditions
\begin{align}
\begin{aligned}
\label{eqn:PDEInitCond}
x(y,t=0)=x_0 \;\; \text{for all }y\\
T(y,t=0)=T_0 \;\; \text{for all }y
\end{aligned}
\end{align}
and boundary conditions \eqref{eqn:PDEBC} with given ambient temperature $T_\infty$ to obtain $x(y,t)$ and $T(y,t)$, i.e., the moisture and temperature distribution inside a wood chip. The initial conditions represent a wet wood chip at room temperature (see Table~\ref{tb:CFDsimulations}).
The total moisture in the wood particle is
\begin{align}
\label{eqn:overallwatercontent}
X(t)=\frac{1}{V}\int_{\Omega}x(y,t)\, \text{d}V
\end{align}
with the wood chip volume $V$ (see Table~\ref{tb:AppxConstParam}).
We do not discuss details of the numerical methods required to solve \eqref{eqn:PDE}-\eqref{eqn:PDEInitCond} but refer to \cite{Sudbrock2014,Sudbrock2015,Scherer2016}, since the present paper focuses on reduced order models and optimal control problems. 

\section{Problem formulation}\label{sec:problemformulation}

We select the ambient temperature $T_\infty$ to be the control input and seek a function $T_\infty(t)$ 
that results in a dry wood chip within a prescribed time span and is at the same time energy optimal in a sense explained below. 
As a preparation, we show that $T_\infty(t)$ permits controlling the temperature and moisture  
by analyzing the controllability of a single wood chip, i.e., the PDEs \eqref{eqn:PDE} subject to the boundary conditions \eqref{eqn:PDEBC}. 

There exist several methods for the controllability analysis of nonlinear distributed parameter systems such as \eqref{eqn:PDE}. 
Some approaches avoid discretizing the PDEs and directly analyze their controllability with semi-group theory \cite{Maidi2015,Delrattre2004}.
Other approaches analyze the finite-dimensional approximation that results for spatial discretization \citep{Leon2002, Hahn2003}. Mature methods \cite{Chen1999, Rosier2009, Levine2009} are available for finite-dimensional systems, but the spatial discretization required for an application to the considered drying process leads to large discretized systems. We will see in Section~\ref{sec:solutionformulation} that order reduction is instrumental to arriving at a finite-dimensional system with an appropriate precision and size.  

A linearization around an operating point is not useful here, since a large temperature range needs to be covered. We therefore perform a nonlinear controllability check with the empirical framework introduced in~\cite{Hahn2002,Hahn2003}. 

\subsection{Empirical controllability Gramian}\label{subsec:CCM}

The empirical controllability analysis is based on simulation results for \eqref{eqn:PDE}. We introduce a discrete model for \eqref{eqn:PDE} that results from spatial discretization for this purpose. Specifically, the wood chip domain $\Omega$ is tessellated with a Cartesian grid consisting of $N$ cubic finite-volume elements of volume $\Delta V$ where the element $i$ belongs to location $y_i\in\Omega$, $i=1,\ldots N$. We obtain
\begin{equation}\begin{aligned}\label{eqn:PDEdiscrete}
\frac{\partial x(y_i,t)}{\partial t} &= \nabla \cdot \Big(\delta \big(T(y_i,t)\big) \nabla x(y_i,t) \Big)\\
\frac{\partial T(y_i,t)}{\partial t} &= s^{-1}(x(y_i,t)) \nabla \cdot \Big( \lambda \big(x(y_i,t)\big) \nabla T(y_i,t) \Big),
\end{aligned}\end{equation}
where $x(y_i,t)$ and $T(y_i,t)$ approximate the moisture $x(y,t)$ and temperature $T(y,t)$ of \eqref{eqn:PDE} at location $y_i$. 
Gradients are approximated in \eqref{eqn:PDEdiscrete}
by balancing heat and mass fluxes through each finite-volume $\Delta V$. The discrete boundary conditions read
\begin{align}
\begin{aligned}
\label{eqn:PDEBCdiscrete}
 n^\T \big( \delta(T(y,t))\nabla x(y,t) \big)  \bigg \vert_{\partial \Omega} &= \Gamma_{x}\big(x(y_i,t),T(y_i,t)\big)\\
 n^\T \big(\lambda(x(y,t)) \nabla T(y,t) \big)  \bigg \vert_{\partial \Omega}  &= \Gamma_{T}\big(x(y_i,t),T(y_i,t)\big) + \alpha T_{\infty}(t). \end{aligned} 
\end{align}
We collect $x(y_i,t)$ and $T(y_i,t)$ for all $i=1,\ldots, N$ in the vector
\begin{align}\label{eqn:StateVectorCFD}
z(t)=[x(y_1,t) \hdots  x(y_N,t)  \,T(y_1,t)  \hdots  T(y_N,t)]^\T ,
\end{align}
$z(t)\in\R^{M}$, with $M=2N$, since $x(y_i, t)\in\R$ and $T(y_i, t)\in\R$. We claim without giving details that a finite-volume model for \eqref{eqn:PDEBCdiscrete} can be written in the form
\begin{align}
\begin{aligned}
\label{eqn:NonlinSysAllg}
\dot{z}(t)&=f\big(z(t)\big)+g\big(z(t)\big)u(t)
\end{aligned}
\end{align}
with $f,g:\R^M\rightarrow\R^M$, state variable $z(t)\in\R^M$ and input $u(t)=T_\infty(t)$, $u(t)\in\R$. 
The original PDEs~\eqref{eqn:PDE} depend on the input $u(t)= T_{\infty}(t)$ through the boundary conditions~\eqref{eqn:PDEBC}. 
The discretized model~\eqref{eqn:PDEdiscrete} inherits the input-affine form of~\eqref{eqn:PDEBC} in the corresponding boundary conditions~\eqref{eqn:PDEBCdiscrete}. Consequently, the finite-dimensional model~\eqref{eqn:NonlinSysAllg} is input-affine, which is a prerequisite for the controllability analysis used here~\cite{Hahn2002,Hahn2003}.
For more details on the finite-volume method we refer to \cite{Moukalled2015, Eymard2000, Fletcher1984} and \cite[pp.~45]{Sudbrock2014}.

The controllability analysis for \eqref{eqn:NonlinSysAllg} is carried out as follows~\cite{Hahn2002,Hahn2003} 
Assume $z(0)$ is a steady state 
\begin{align*}
f\big(z(0)\big)+g\big(z(0)\big)u_{0} = 0
\end{align*}
for some constant input $u_0$. 
We record the response $z_{dli}(t)$ to impulses  
\begin{align}\label{eqn:GramImpulseInput}
u(t)=h_d D_l e_i \delta(t)+u_0
\end{align}
for amplitudes $h_d\in\R$, 
orthonormal matrices $D_l\in\R^{\gamma \times \gamma}$, where $\gamma$ is the number of inputs and $e_i \in \R^\gamma$ are the standard unit vectors.
We can then determine the empirical controllability Gramian
\begin{align}
\label{eqn:CCM}
G=\sum_{i=1}^\gamma \sum_{l=1}^r \sum_{d=1}^s\frac{1}{rsh_d^2}\int_0^\infty \big(z_{dli}(t)-z_{\text{ss},dli}\big)\big(z_{dli}(t)-z_{\text{ss},dli}\big)^\T  \diff t,
\end{align}
where $G \in \R ^{M \times M}$ is symmetric and $z_{\mathrm{ss},dli} = \lim\limits_{t \to \infty}z_{dli}(t)$.  
The Gramian $G$ is composed from data for $s$ input magnitudes $h_d$, $d=1,\ldots ,s$ and $r$ perturbation directions $D_l$, $l=1,\ldots,r$ to account for the nonlinearity in the controllability analysis~\cite{Hahn2002,Hahn2003}. Thus, $s\cdot r \cdot \gamma$ simulations are required in total to determine \eqref{eqn:CCM}.

For nonlinear systems, we cannot make a statement on global controllability, but the following Lemmata are valid locally~\cite{Lall1999}. 

Let $\beta_i$, $i=1,\ldots,M$ refer to the eigenvalues and $v_i$ to the associated eigenvectors of the eigenvalue problem
\begin{align}\label{eqn:EigProblemLarge}
G v_i - \beta_i v_i = 0.
\end{align}

\begin{lemma}\label{lma:LinearCtrbCond} (see, e.g., \cite[Chapter 6.2]{Chen1999}) Assume the system \eqref{eqn:NonlinSysAllg} to be linear and stable. Then \eqref{eqn:NonlinSysAllg} is controllable if and only if $\beta_i>0$ for all $i=1,\ldots,M$, i.e., if and only if the linear controllability Gramian is positive definite.
\end{lemma}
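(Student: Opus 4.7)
The plan is to reduce the empirical Gramian~\eqref{eqn:CCM} to the classical infinite-horizon controllability Gramian $W_c = \int_0^\infty e^{At} BB^\T e^{A^\T t}\diff t$ of the associated linear system $\dot z = Az + Bu$. Once this identification $G=W_c$ is in hand, the statement is exactly the Kalman-type test for LTI systems cited as~\cite[Chapter 6.2]{Chen1999}, combined with the elementary observation that $W_c$ is symmetric and positive semidefinite, so its spectrum is real and non-negative and positive definiteness is equivalent to $\beta_i>0$ for every $i$.

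To establish $G = W_c$, I would first shift coordinates so that the steady state $(z(0),u_0)$ is mapped to the origin; stability of $A$ ensures that the shifted response to the impulsive input~\eqref{eqn:GramImpulseInput} satisfies $z_{dli}(t)-z_{\text{ss},dli} = e^{At}B\, h_d D_l e_i$ for $t>0$ and that $z_{\text{ss},dli}$ coincides with the unperturbed steady state. Substituting into~\eqref{eqn:CCM} cancels the factor $h_d^2$ in the denominator, giving
\begin{equation*}
G \;=\; \frac{1}{rs}\sum_{d=1}^s\sum_{l=1}^r\sum_{i=1}^\gamma \int_0^\infty e^{At} B\, D_l e_i e_i^\T D_l^\T B^\T e^{A^\T t}\diff t.
\end{equation*}
Orthonormality of $D_l$ together with $\sum_{i=1}^\gamma e_i e_i^\T = I_\gamma$ yields $\sum_i D_l e_i e_i^\T D_l^\T = D_l D_l^\T = I_\gamma$, and the remaining sums over $d$ and $l$ each produce a factor that exactly cancels the $1/(rs)$ in front, leaving $G = W_c$.

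It then remains to invoke the classical result: for a stable LTI system, $(A,B)$ is controllable if and only if $W_c \succ 0$. Since $W_c$ is symmetric positive semidefinite, this is in turn equivalent to every eigenvalue $\beta_i$ in~\eqref{eqn:EigProblemLarge} being strictly positive, which is what the lemma asserts.

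I do not anticipate a substantive obstacle; the content of the lemma is essentially bookkeeping showing that the empirical construction specialises to the textbook Gramian in the linear case. The only point that genuinely requires care is the stability assumption, which is used twice: once to guarantee convergence of the integral in~\eqref{eqn:CCM}, and once to justify that $z_{\text{ss},dli}$ returns to the unperturbed equilibrium after the impulse so that the integrand has the clean form used above.
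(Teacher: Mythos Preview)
Your argument is correct. Note, however, that the paper does not actually prove Lemma~\ref{lma:LinearCtrbCond}: it is stated as a known fact and simply referenced to \cite[Chapter~6.2]{Chen1999}. The textbook result there is phrased for the classical infinite-horizon Gramian $W_c=\int_0^\infty e^{At}BB^\T e^{A^\T t}\diff t$, not for the empirical construction~\eqref{eqn:CCM}, so the paper is implicitly relying on the identification $G=W_c$ in the linear case (this is the content of~\cite{Hahn2002,Hahn2003}). Your computation makes that identification explicit and then invokes the cited result; in that sense you have supplied more detail than the paper, not a different route. The two uses of stability you flag are exactly the places where the hypothesis enters, and your handling of the orthonormal $D_l$ and the cancellation of $h_d^2$ and $1/(rs)$ is correct.
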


\begin{lemma}\label{lma:CtrlSubspace} (see, e.g., \cite{Moore1981}) Let $\beta_k$ and $v_k$, $k=1,\ldots,M$ be the eigenvalues and associated eigenvectors of \eqref{eqn:CCM} for a stable linear system. Then all points in the state space that can be reached from the origin within a prescribed time $t$ with an energy $\int_0^t u^\T(\tau) u(\tau) \diff\tau \leq 1$ are located within a hyperellipsoid with semi axes $\sqrt{\beta_k} v_k$, $k=1,\ldots,M$.
\end{lemma}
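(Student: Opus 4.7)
The plan is to reduce this to the classical fact that, for a stable linear time-invariant system $\dot z = A z + B u$, the set of states reachable from the origin using an input of energy at most one is the ellipsoid $\{z \in \R^M : z^\T W_c^{-1} z \le 1\}$, where $W_c$ is the standard controllability Gramian. The bridge that must be built first is to show that, in the linear case, the empirical Gramian $G$ defined in~\eqref{eqn:CCM} \emph{coincides} with $W_c = \int_0^\infty e^{A\tau} B B^\T e^{A^\T \tau} \diff\tau$; once this identification is established, the eigenstructure claim about semi-axes $\sqrt{\beta_k}\, v_k$ is a straightforward consequence of diagonalizing the quadratic form.

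For the first step, I would argue as follows. For a stable linear system, the response to the shifted-impulse input $u(t) = h_d D_l e_i \delta(t) + u_0$ about the steady state associated with $u_0$ is, after subtracting $z_{\mathrm{ss},dli}$, given by $z_{dli}(t) - z_{\mathrm{ss},dli} = h_d\, e^{At} B D_l e_i$ for $t > 0$. Plugging this into~\eqref{eqn:CCM} and using $\sum_{l=1}^r D_l e_i e_i^\T D_l^\T = I$ (for the standard choices in the empirical framework, e.g., $r = 1$ and $D_1 = I$ in the single-input case here with $\gamma = 1$), and $\sum_{d=1}^s h_d^2 / (s h_d^2) = 1$ after cancellation of the $h_d^2$ factors, I obtain
\begin{equation*}
G = \int_0^\infty e^{A\tau} B B^\T e^{A^\T \tau} \diff\tau = W_c,
\end{equation*}
which is finite by stability. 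Thus $G$ is precisely the classical infinite-horizon controllability Gramian.

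For the second step, I would invoke the standard minimum-energy reachability result: for any $z^\star \in \mathrm{range}(W_c)$, the input of minimum $L^2$ energy steering $z(0)=0$ to $z(t)=z^\star$ in a prescribed (sufficiently large) time $t$ is $u^\star(\tau) = B^\T e^{A^\T(t-\tau)} W_c^{-1} z^\star$, with energy $\int_0^t u^{\star\T}(\tau) u^\star(\tau)\, \diff\tau = (z^\star)^\T W_c^{-1} z^\star$. Hence the set of states reachable under the energy constraint $\int_0^t u^\T u\, \diff\tau \le 1$ is exactly $\mathcal{E} = \{z : z^\T G^{-1} z \le 1\}$. Diagonalizing via the spectral decomposition $G = \sum_{k=1}^M \beta_k v_k v_k^\T$ with orthonormal $v_k$, writing $z = \sum_k c_k v_k$ transforms the inequality into $\sum_k c_k^2/\beta_k \le 1$, which is the ellipsoid with principal semi-axes $\sqrt{\beta_k} v_k$.

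The main obstacle, and the place where the statement must be interpreted with some care, is the mismatch between the finite horizon $t$ appearing in Lemma~\ref{lma:CtrlSubspace} and the infinite upper limit in the definition~\eqref{eqn:CCM}: strictly speaking, the finite-horizon reachable ellipsoid uses $W_c(t) = \int_0^t e^{A\tau} B B^\T e^{A^\T \tau}\diff\tau$, not $G = W_c(\infty)$. For a stable system $W_c(t) \to G$ monotonically in the positive semidefinite order as $t \to \infty$, so the ellipsoids $\{z : z^\T W_c(t)^{-1} z \le 1\}$ converge to $\mathcal{E}$; the lemma should therefore be read as the asymptotic (or sufficiently long horizon) characterization of the reachable set, and this is the only subtlety the proof needs to flag explicitly.
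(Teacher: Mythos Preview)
The paper does not actually prove Lemma~\ref{lma:CtrlSubspace}; it is stated as a known result with a pointer to~\cite{Moore1981} and is used only to interpret the eigenstructure of the Gramian in Section~\ref{subsec:CTRLDryingProcess}. There is therefore no ``paper's own proof'' to compare against.

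On its own merits your argument is correct and is the standard one. The reduction $G=W_c$ goes through as you wrote it: for the linear system the $h_d^2$ cancel, $\sum_i e_i e_i^\T = I_\gamma$, and orthogonality of each $D_l$ gives $D_l D_l^\T = I_\gamma$, so the averaging over $l$ and $d$ is trivial and one is left with $\int_0^\infty e^{A\tau} B B^\T e^{A^\T\tau}\,\diff\tau$. The second step is the textbook minimum-energy transfer, and the diagonalization is immediate since $G$ is symmetric. You are also right to flag the horizon mismatch: the lemma as stated mixes the finite-horizon reachable set with the infinite-horizon Gramian~\eqref{eqn:CCM}, and strictly one has $\{z: z^\T W_c(t)^{-1} z\le 1\}\subseteq\{z: z^\T G^{-1} z\le 1\}$ with equality only in the limit $t\to\infty$. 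Reading the lemma as the asymptotic bound, as you do, is the appropriate interpretation and is consistent with how the paper uses it qualitatively.
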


Essentially, the eigenvalues $\beta_1 \geq \ldots \geq \beta_M $ and their corresponding eigenvectors $v_1, \ldots , v_M $ determine the range and direction in which the system is easiest to control.

It is impractical to determine Gramians~\eqref{eqn:CCM} with the discretized PDEs~\eqref{eqn:NonlinSysAllg} if $s\cdot r\cdot \gamma$ is large. 
Even though the particular optimal control problem solved in section~\ref{sec:ApplicationToWoodChip} only involves a single input ($\gamma= 1$), computing~\eqref{eqn:CCM} with~\eqref{eqn:NonlinSysAllg} is already too time-consuming.\footnote{Calculating the Gramian~\eqref{eqn:CtrbGramROMDiscrete} with the reduced model in Section~\ref{subsec:CTRLDryingProcess} requires \unit[188]{s} with a matlab implementation on a standard desktop PC with an Intel i7-6700 CPU running at 3.4GHz. The corresponding calculation with the discretized PDEs~\eqref{eqn:NonlinSysAllg} was incomplete after one day.}
A reduction of the model~\eqref{eqn:PDEdiscrete} is thus instrumental to performing the controllability analysis. We introduce a method in the next section that results in both an acceleration of the simulations and a reduction of the eigenvalue problem.
Note that we state and treat the problem for arbitrary $s$, $r$ and $\gamma$ for the sake of generality.  

\section{Solution formulation}\label{sec:solutionformulation}
A ROM is derived in section~\ref{subsec:modelreduction} and used to reduce the computational effort for the controllability analysis in~\ref{subsec:CtrbGramROM}.

\subsection{Reduced order model}\label{subsec:modelreduction}
We briefly introduce the model reduction procedure as required for the present paper and refer to \cite{Berner2017, Scherer2016} for details. The model reduction is based on POD and subsequent Galerkin projection \cite{Sirovich1987}.
We discuss the reduction of Fourier's law of heat conduction~\eqref{eqn:PDET}. Fick's law of diffusion~\eqref{eqn:PDEX} can be treated analogously. 

It is the first step to obtain so called snapshots
\begin{align*}
z_T(t_j)&=[T(y_1,t_j) \hdots  T(y_N,t_j)]^\T
\end{align*}
$z_T(t_j)\in\R^{N}$ that solve or approximately solve \eqref{eqn:PDET} at time points $t_j$, $j=1,\ldots,m$ for boundary conditions \eqref{eqn:PDEBC} and given initial conditions \eqref{eqn:PDEInitCond} at the spatial points $y_i\in\Omega$, $i=1,\ldots,N$. 
 
Assuming that $b$ linear independent snapshots exist, we can find $b$ orthonormal basis vectors  $\phi_{T,k}=[\varphi_{T,k}(y_1) \hdots  \varphi_{T,k}(y_N) ]^\T $, $\phi_{T,k}\in\R^{N}$, $k=1,\ldots,b$, of the snapshot set, also called modes, such that 
\begin{align}\label{eqn:SumLinComb}
T(y_i,t_j)&= \bar{T}(y_i) + \textstyle\sum_{k=1}^{b} c_{T,k}(t_j) \varphi_{T,k}(y_i) 
\end{align} 
where 
\begin{align}\label{eqn:SnpMean}
\bar{T}(y_i) = \tfrac{1}{m}\textstyle\sum_{j=1}^{m} T(y_i,t_j),
\end{align} 
$\bar{T}(y_i)\in\R$, is the time average and
\begin{align}\label{eqn:TimeCoeff}
c_{T,k}(t_j)=\langle  T(y_i,t_j)- \bar{T}(y_i),\, \varphi_{T,k}(y_i) \rangle,
\end{align} 
$c_{T,k}(t_j) \in\R$, are time-dependent coefficients. The brackets $\langle \cdot, \, \cdot \rangle$ denote the standard inner product in its discrete form
\begin{align}\label{eqn:InnerProductL2}
\langle a(\cdot), b(\cdot)\rangle &=\textstyle\sum_{i=1}^{N} a(y_i)\,b(y_i) \Delta V,
\end{align}
for $a(y_i),b(y_i):\, \Omega \rightarrow \R$ and the discrete volume $\Delta V \in \R$. Truncating the sum \eqref{eqn:SumLinComb} at some cut-off value $n_T<b$ does not result in an exact representation but in an approximation of the initial set of snapshots. A systematic method to determine the modes and number $n_T$ so that the truncated sum results in a good approximation is a singular value decomposition of the snapshot set. We refer to \cite{Sirovich1987,Cordier2008a,Cordier2008b} for further details. 
Since $n_T$ corresponds to the number of ODEs in the ROM, $n_T$ should be chosen as small as possible. The approximation reads
\begin{align}\label{eqn:SumLinApprox}
T(y_i,t_j)&\approx \bar{T}(y_i) + \textstyle\sum_{k=1}^{n_T} \varphi_{T,k}(y_i) c_{T,k}(t_j).
\end{align}
We now seek $n_T$ ordinary differential equations for the coefficients \eqref{eqn:TimeCoeff} such that their time continuous results $c_{T,k}(t)$ yield a reasonable approximation for \eqref{eqn:SumLinApprox} at $t=t_j$ and all times in between those sample times. 
We apply three simplifications in the explanation to follow:  
(i) We assume a continuous representation of $\varphi_{T,k}(y_i)$, i.e., we assume that $\varphi_{T,k}(y)$ is defined for all points $y\in\Omega$, since it allows us to apply integrals and differential operators. 
(ii) We assume the material parameters $s$ and $\lambda$ to be constant in order to avoid tedious applications of the product and chain rules. We stress this assumption is only applied to simplify the summary of the method. The model reductions in Section~\ref{sec:ApplicationToWoodChip} are performed with the non-constant quantities $s(x(y, t))$, $\lambda(x(y,t))$ and $\delta(T(y,t))$ given in appendix A and all results presented in Section~\ref{sec:OCP} are obtained with these dependencies. (iii) We omit the dependence on $y_i$ and $t_j$ for brevity.
Substituting \eqref{eqn:SumLinApprox} into \eqref{eqn:PDET} yields
\begin{align*}
&\tfrac{\partial}{\partial t}\big(\bar{T}+\textstyle\sum_{k=1}^{n_T} \varphi_{T,k}c_{T,k}\big)\approx s^{-1} \nabla \cdot \Big( {\lambda} \nabla \big(\bar{T}+\textstyle\sum_{k=1}^{n_T} \varphi_{T,k}c_{T,k}\big) \Big)
\end{align*}
The projection onto the first $l=1,\ldots,n_T$ modes reads
\begin{align}\label{eqn:PDEProjection}
\begin{aligned} &\big\langle \tfrac{\partial}{\partial t}\textstyle\sum_{k=1}^{n_T} \varphi_{T,k} c_{T,k}, \; \varphi_{T,l} \big\rangle
\approx \\
&\Big\langle  s^{-1} \nabla \cdot  \Big( {\lambda} \nabla \big(\bar{T}+\textstyle\sum_{k=1}^{n_T} \varphi_{T,k}c_{T,k}\big) \Big) ,\;   \varphi_{T,l} 
\Big\rangle. \end{aligned} 
\end{align}
Exploiting the time independence and orthonormality of the modes, i.e., 
\begin{align}\label{eqn:OrthoNormality}
\langle \varphi_{x,l} ,\, \varphi_{x,k} \rangle=\delta_{l,k}
\end{align}
with Kronecker's delta $\delta_{l,k}$, results in the desired ordinary differential equations
\begin{align}\label{eqn:ROM}
\dot{c}_{T,l} \approx \Big\langle  s^{-1} \nabla \cdot  \Big( {\lambda} \nabla \big(\bar{T}+\textstyle\sum_{k=1}^{n_T} \varphi_{T,k}c_{T,k}\big) \Big) ,\;   \varphi_{T,l} 
\Big\rangle.
\end{align}
The $l=1,\ldots,n_T$ ODEs \eqref{eqn:ROM} constitute the ROM for temperature diffusion. Note that the only time-dependent variables are the coefficients $c_{T,k}$, $k=1,\ldots,n_T$. 

We apply Gauss's theorem to explicitly consider the boundary conditions and the control input in \eqref{eqn:ROM}. Since the boundary conditions \eqref{eqn:PDEBC} are functions of temperature and moisture, we need both, the temperature approximation \eqref{eqn:SumLinApprox} and the corresponding moisture approximation. Let $\bar{x}(y_i)$, $n_x$, $\varphi_{x,k}(y_i) $ and $c_{x,k}(t_j)$, $i=1,\ldots,N$, $j=1,\ldots,m$, $k=1,\ldots,n_x$ be the time average, cut-off value, modes and time coefficients, respectively, obtained from a set of snapshots for the moisture determined with the methods presented in section \ref{subsec:modelreduction}. Then
\begin{align}\label{eqn:SumLinApproxWater}
x(y_i,t_j)&\approx \bar{x}(y_i) + \textstyle\sum_{k=1}^{n_x} \varphi_{x,k}(y_i) c_{x,k}(t_j)
\end{align}
is an approximation like \eqref{eqn:SumLinApprox} but determined for the moisture. Without giving details we state that \eqref{eqn:ROM} is transformed into
\begin{align}\label{eqn:ROMGauss}
\dot{c}_{T,l} &\approx -\textstyle\int_\Omega  \Big(\lambda \nabla \big( \bar{T}+\textstyle\sum_{k=1}^{n_T}\varphi_{T,k}c_{T,k}\big)\Big) \cdot \nabla \varphi_{T,l} \diff V + \nonumber\\
& \textstyle\int_{\partial \Omega} \varphi_{T,l} \, s^{-1}\, \Gamma_{T} \big(\bar{x}+\textstyle\sum_{k=1}^{n_x}\varphi_{x,k}c_{x,k}, \\ &\bar{T}+\textstyle\sum_{k=1}^{n_T}\varphi_{T,k}c_{T,k} \big) \diff S  + 
T_{\infty}(t) \,  \textstyle\int_{\partial \Omega} \varphi_{T,l} \, s^{-1}\, \alpha \diff S \nonumber
\end{align}
when the volume integral of the inner product is transformed into a surface integral with Gauss's theorem. The boundary condition $\Gamma_T$ and ambient temperature $T_\infty(t)$ appear explicitly in \eqref{eqn:ROMGauss} (see \cite{Berner2017} for details). Note that the ODEs \eqref{eqn:ROMGauss} are nonlinear due to the nonlinearity of the boundary condition $\Gamma_T$. In fact, \eqref{eqn:ROMGauss} is input-affine when the ambient temperature $T_\infty(t)$ is considered to be the control input. Note that this is a prerequisite for the calculation of \eqref{eqn:CCM} according to \cite{Hahn2002,Hahn2003}.

The initial conditions for the temperature in \eqref{eqn:PDEInitCond} are considered by projecting \eqref{eqn:PDEInitCond} onto the first $k=1,\ldots,n_T$ modes. If the temperature in \eqref{eqn:PDEInitCond} is part of the snapshot set then the coefficients $c_{T,k}(t_0=0)$, $k=1,\ldots,n_T$ from decomposition \eqref{eqn:SumLinApprox} for $t_0=0$ are the desired initial conditions for \eqref{eqn:ROMGauss}. 

We repeat the procedure of section \ref{subsec:modelreduction} with the moisture approximation \eqref{eqn:SumLinApproxWater} to derive $n_x$ ODEs for the moisture diffusion. This yields
 \begin{align}\label{eqn:PDEProjectionX}
  \dot{c}_{x,l}(t)&= -\int_\Omega  \Big(\delta \nabla \big( \bar{x}+\textstyle\sum_{k=1}^{n_x}\varphi_{x,k}c_{x,k}\big)\Big) \cdot \nabla \varphi_{x,l} \diff V + \nonumber\\
& \int_{\partial \Omega} \varphi_{x,l} \, \Gamma_{x} \big(\bar{x}+\textstyle\sum_{k=1}^{n_x}\varphi_{x,k}c_{x,k}, \, \bar{T}+\textstyle\sum_{k=1}^{n_T}\varphi_{T,k}c_{T,k} \big) \diff S,
\end{align}
where $l= 1, \dots, n_x$. 
The set of $n_x+n_T=n$ ODEs 
 \begin{align}\label{eqn:PODGalModel}
\dot{c}(t) &= \begin{bmatrix}
f_{\text{ROM,x},1}\big(c_{x,k}(t),c_{T,l}(t)\big)\\
\vdots\\
f_{\text{ROM,x},n_x}\big(c_{x,k}(t),c_{T,l}(t)\big)\\
f_{\text{ROM,T},1}\big(c_{x,k}(t),c_{T,l}(t)\big)\\
\vdots\\
f_{\text{ROM,T},n_T}\big(c_{x,k}(t),c_{T,l}(t)\big)\\
\end{bmatrix} =f_\text{ROM}\big(c(t) \big),
\end{align}
where $f_{\text{ROM,T}}$ and $f_{\text{ROM,x}}$ refer to the r.h.s.\ of~\eqref{eqn:PDEProjection} and~\eqref{eqn:PDEProjectionX}, respectively, constitute the ROM with $f_\mathrm{ROM}:\R^n\rightarrow\R^n$ and 
\begin{align}\label{eqn:PODGalStates}
c(t)&=[c_{x,1}(t) \hdots  c_{x,n_x}(t) \, c_{T,1}(t)  \hdots  c_{T,n_T}(t)]^\T  \;\in\R^{n}.
\end{align}
Note that all ODEs are coupled, since the states \eqref{eqn:PODGalStates} appear in all ODEs. Solving \eqref{eqn:PODGalModel} for given initial conditions yields time series for $c_{T,i}(t)$ and $c_{x,i}(t)$ that are substituted in \eqref{eqn:SumLinApprox} and \eqref{eqn:SumLinApproxWater} to determine the temperature and moisture. Collecting \eqref{eqn:SumLinApprox} and \eqref{eqn:SumLinApproxWater} as in 
\eqref{eqn:StateVectorCFD} yields the state variable of the finite-volume model 
\begin{align}\label{eqn:StateVectorCFDApprox}
z(t) \approx \Phi c(t) + \bar{z},
\end{align}
where the modes of~\eqref{eqn:SumLinApprox} and \eqref{eqn:SumLinApproxWater} are collected in
\begin{align}\label{eqn:AllModes}
\Phi = \begin{bmatrix}\begin{smallmatrix} 
\varphi_{x,1}(y_1)&\ldots&\varphi_{x,n_x}(y_1)&0&\ldots&0\\
\ldots&\ldots&\ldots&\ldots&\ldots&\ldots\\
\varphi_{x,1}(y_N)&\ldots&\varphi_{x,n_x}(y_N)&0&\ldots&0\\
0&\ldots&0&\varphi_{T,1}(y_1)&\ldots&\varphi_{T,n_T}(y_1)\\
\ldots&\ldots&\ldots&\ldots&\ldots&\ldots\\
0&\ldots&0&\varphi_{T,1}(y_N)&\ldots&\varphi_{T,n_T}(y_N)\\
\end{smallmatrix}\end{bmatrix},
\end{align}
$\Phi\in\R^{M\times n}$, and 
\begin{align}\label{eqn:AllMeans}
\bar{z}=[\bar{x}(y_1) \hdots  \bar{x}(y_N)  \, \bar{T}(y_1)  \hdots  \bar{T}(y_N)]^\T\; \in\R^M
\end{align}
is the time average of the snapshot set~\eqref{eqn:SnpMean} for temperature and moisture. 

\subsection{ROM based controllability Gramian}\label{subsec:CtrbGramROM}
The ROM of section \ref{subsec:modelreduction} is ultimately used to solve the eigenvalue problem \eqref{eqn:EigProblemLarge}. 
The required steps are summarized in propositions~\ref{Prp:ApproxGram} and~\ref{Prp:SmallEigProblem} below. 
We determine the impulse response $z_{dli}(t)$ required for the Gramian \eqref{eqn:CCM} with the reduced model \eqref{eqn:PODGalModel}. 
More precisely, the input \eqref{eqn:GramImpulseInput} is applied to the ROM \eqref{eqn:PODGalModel} to determine $c^{dli}(t)$, i.e., the impulse response of the ROM first. The desired impulse response $z_{dli}(t)$ of the finite-volume model then results by mapping the ROM state variables to the original state variables with \eqref{eqn:StateVectorCFDApprox}.

\begin{proposition}\label{Prp:ApproxGram}
Let
\begin{align}\label{eqn:CtrbGramROM}
W=\sum_{i=1}^\gamma \sum_{l=1}^r \sum_{d=1}^s \frac{1}{rsh_d^2}\int_0^\infty \big(c^{dli}(t)-c_{\text{ss}}^{dli}\big)\big(c^{dli}(t)-c_{\text{ss}}^{dli}\big)^\T  \diff t,
\end{align}
$W\in\R^{n\times n}$, refer to the controllability Gramian of the reduced order model 
where $c_{\text{ss}}^{dli} = \lim\limits_{t \to \infty}c^{dli}(t)$. 
Then the Gramian \eqref{eqn:CCM} can be approximated by
\begin{align}
G\approx\tilde{G}=\Phi W \Phi^\T. \label{eqn:ApproxCtrbGram}
\end{align}
\end{proposition}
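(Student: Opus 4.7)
My plan is to exploit the linear reconstruction formula~\eqref{eqn:StateVectorCFDApprox} and the fact that $\Phi$ and $\bar{z}$ are constant (independent of time and of the indices $d,l,i$), so that $\Phi$ can be pulled out of the integral and the triple sum defining~\eqref{eqn:CCM}.

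First, I would apply~\eqref{eqn:StateVectorCFDApprox} separately to the impulse response and to its limit as $t\to\infty$. Writing $z_{dli}(t)$ as the reconstruction of the ROM trajectory $c^{dli}(t)$ driven by the perturbation~\eqref{eqn:GramImpulseInput}, one has
\begin{align*}
z_{dli}(t)\approx \Phi c^{dli}(t)+\bar{z},\qquad z_{\mathrm{ss},dli}\approx \Phi c_{\mathrm{ss}}^{dli}+\bar{z}.
\end{align*}
The constant offset $\bar{z}$ then cancels in the difference, yielding $z_{dli}(t)-z_{\mathrm{ss},dli}\approx \Phi\bigl(c^{dli}(t)-c_{\mathrm{ss}}^{dli}\bigr)$.

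Next I would substitute this into the outer product appearing in the integrand of~\eqref{eqn:CCM}, obtaining
\begin{align*}
\bigl(z_{dli}(t)-z_{\mathrm{ss},dli}\bigr)\bigl(z_{dli}(t)-z_{\mathrm{ss},dli}\bigr)^\T\approx \Phi\bigl(c^{dli}(t)-c_{\mathrm{ss}}^{dli}\bigr)\bigl(c^{dli}(t)-c_{\mathrm{ss}}^{dli}\bigr)^\T\Phi^\T.
\end{align*}
Because $\Phi$ is a constant matrix, it can be pulled out of both the time integral and the three finite sums over $i$, $l$, $d$. What remains inside the brackets is precisely the ROM Gramian $W$ defined in~\eqref{eqn:CtrbGramROM}, so one arrives at $G\approx \Phi W\Phi^\T=\tilde{G}$, which is the claim.

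There is no genuine obstacle here: the proof is essentially bookkeeping once the reconstruction~\eqref{eqn:StateVectorCFDApprox} is available and one observes that the mean $\bar{z}$ cancels and $\Phi$ factors out. The only subtlety worth noting is the sense in which the approximation holds: the error in $G-\tilde{G}$ inherits the POD truncation error of~\eqref{eqn:StateVectorCFDApprox}, so the $\approx$ symbol in~\eqref{eqn:ApproxCtrbGram} means equality up to that truncation error rather than an exact identity. I would mention this briefly but not attempt a quantitative error bound, since the proposition is stated as an approximation.
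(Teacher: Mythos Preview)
Your proposal is correct and follows essentially the same route as the paper: substitute the reconstruction~\eqref{eqn:StateVectorCFDApprox} into~\eqref{eqn:CCM}, observe that the mean $\bar{z}$ cancels in the difference, and factor the constant matrix $\Phi$ out of the integral and the triple sum to obtain $\Phi W\Phi^\T$. The paper's proof is slightly more compact (it writes out the substitution in a single display rather than treating the difference and outer product separately), but the logical content is identical.
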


\begin{proof} Substituting \eqref{eqn:StateVectorCFDApprox} into \eqref{eqn:CCM} yields
\begin{align*}
G &\approx \sum_{i=1}^\gamma \sum_{l=1}^r \sum_{d=1}^s \frac{1}{rsh_d^2}\int_0^\infty \big(\Phi c^{dli}(t) + \bar{z} -\Phi c_{\text{ss}}^{dli} - \bar{z} \big) \\ &\hspace{1cm} \big(\Phi c^{dli}(t) + \bar{z} -\Phi c_{\text{ss}}^{dli} - \bar{z} \big)^\T  \diff t\\
&= \Phi \sum_{i=1}^\gamma \sum_{l=1}^r \sum_{d=1}^s  \frac{1}{rsh_d^2}\int_0^\infty \big( c^{dli}(t)  -c_{\text{ss}}^{dli} \big)\big(c^{dli}(t) - c_{\text{ss}}^{dli}  \big)^\T  \diff t \;  \Phi^\T\\
&=\Phi W \Phi^\T=\tilde{G}.
\end{align*}
which is the claim~\eqref{eqn:ApproxCtrbGram}.
\end{proof}

Since $\tilde{G}$ is an approximation for Gramian~\eqref{eqn:CCM}, 
\begin{align}\label{eqn:EigProblemLargeApprox}
\tilde{G} \tilde{v}_k - \tilde{\beta}_k \tilde{v}_k = 0
\end{align}
is an approximation for eigenvalue problem \eqref{eqn:EigProblemLarge}, where
\begin{align}\label{eqn:LinApproxEig}
\beta_k \approx \tilde{\beta}_k \;\text{and}\;
v_k \approx \tilde{v}_k 
\end{align}
are approximations for $k=1,\ldots,n$ eigenvalues and corresponding eigenvectors of $G$, respectively.

\begin{proposition}\label{Prp:SmallEigProblem}
Let $\tilde{\beta}_k$ be as in~\eqref{eqn:EigProblemLargeApprox}. 
Then the non-zero eigenvalues of \eqref{eqn:EigProblemLargeApprox} are equal to those of the smaller $n$-dimensional eigenvalue problem
\begin{align}\label{eqn:EigProblemSmall}
W \Phi^\T \Phi w_k  -  \tilde{\beta}_k w_k = 0
\end{align}
and the respective eigenvectors of \eqref{eqn:EigProblemLargeApprox} are given by 
\begin{align}\label{eqn:EigVectorLargeFromSmall}
\tilde{v}_k =\Phi w_k,
\end{align}
where $w_k\in\R^n$ is the eigenvector of \eqref{eqn:EigProblemSmall}.
\end{proposition}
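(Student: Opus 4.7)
The plan is to exploit the standard fact that for any matrices $A \in \R^{p \times q}$ and $B \in \R^{q \times p}$ the products $AB$ and $BA$ have the same nonzero eigenvalues (with multiplicities), and that an eigenvector $w$ of $BA$ with nonzero eigenvalue is mapped to an eigenvector $Aw$ of $AB$ with the same eigenvalue. Here I would set $A = \Phi$ and $B = W\Phi^\T$, so that $AB = \Phi W \Phi^\T = \tilde G$ and $BA = W\Phi^\T \Phi$, which is exactly the small eigenvalue problem in \eqref{eqn:EigProblemSmall}.

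For the forward direction, I would start from \eqref{eqn:EigProblemSmall}, i.e., $W\Phi^\T \Phi w_k = \tilde\beta_k w_k$, and simply multiply on the left by $\Phi$ to obtain $\Phi W \Phi^\T (\Phi w_k) = \tilde\beta_k (\Phi w_k)$, which is precisely \eqref{eqn:EigProblemLargeApprox} with $\tilde v_k = \Phi w_k$. I would then note that, when $\tilde\beta_k \neq 0$, the vector $\tilde v_k = \Phi w_k$ is nonzero: otherwise $\Phi w_k = 0$ would force $\Phi^\T \Phi w_k = 0$, hence $W \Phi^\T \Phi w_k = 0$, contradicting $\tilde\beta_k w_k \neq 0$.

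For the converse direction, I would take any eigenpair $(\tilde v_k, \tilde\beta_k)$ of \eqref{eqn:EigProblemLargeApprox} with $\tilde\beta_k \neq 0$ and observe that $\tilde v_k = \tilde\beta_k^{-1} \Phi (W\Phi^\T \tilde v_k)$ lies in the column space of $\Phi$, so that $\tilde v_k = \Phi w_k$ for some $w_k \in \R^n$. Substituting into \eqref{eqn:EigProblemLargeApprox} gives $\Phi (W \Phi^\T \Phi w_k - \tilde\beta_k w_k) = 0$. Since the columns of $\Phi$ are the POD modes, which are orthonormal up to the constant factor $\Delta V$ (so that $\Phi^\T \Phi$ is a nonzero scalar multiple of the identity and $\Phi$ has full column rank), $\Phi$ is injective, and this yields \eqref{eqn:EigProblemSmall}.

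The only subtle point is the handling of the nonzero-eigenvalue condition and the appeal to the injectivity of $\Phi$; once these are in place, the argument reduces to a one-line left-multiplication in each direction. I would write the proposition's proof in just these two short implications, emphasising the computational payoff: the $M\times M$ problem \eqref{eqn:EigProblemLargeApprox} need never be assembled, since the $n$ relevant eigenvalues and the coefficients $w_k$ of the corresponding eigenvectors can be computed from the $n \times n$ problem \eqref{eqn:EigProblemSmall} and then lifted through \eqref{eqn:EigVectorLargeFromSmall}.
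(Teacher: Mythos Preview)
Your proof is correct and follows essentially the same approach as the paper: the paper invokes Sylvester's determinant identity to equate the characteristic polynomials of $\Phi W\Phi^\T$ and $W\Phi^\T\Phi$ (which is exactly the $AB$/$BA$ nonzero-spectrum fact you cite), and then left-multiplies \eqref{eqn:EigProblemSmall} by $\Phi$ to obtain \eqref{eqn:EigProblemLargeApprox}, just as you do. Your version is in fact slightly more complete, since you also verify that $\Phi w_k \neq 0$ when $\tilde\beta_k \neq 0$ and supply the converse direction via the injectivity of $\Phi$, neither of which the paper spells out.
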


\begin{proof}We first consider the eigenvalues. Substituting \eqref{eqn:ApproxCtrbGram} into \eqref{eqn:EigProblemLargeApprox} and using Sylvester's determinant identity, we can write the characteristic polynomial determinant for \eqref{eqn:EigProblemLargeApprox} as
\begin{align}\label{eqn:CharPolyEigProb}
\det\big( \tilde{\beta}_k I_M -\Phi W\Phi^T \big) = \tilde{\beta}_k^{M-n} \det\big( \tilde{\beta}_k I_n - W\Phi^T\Phi\big),
\end{align}
where $I_M$ and $I_n$ are the $M\times M$ and $n\times n$ identity matrices, respectively. 
We observe that the non-trivial roots $\tilde{\beta}_k$ of the right hand side of \eqref{eqn:CharPolyEigProb}, i.e., the eigenvalues of~\eqref{eqn:EigProblemSmall}, correspond to the non-zero roots of the left hand side of \eqref{eqn:CharPolyEigProb}, i.e., the non-zero eigenvalues of \eqref{eqn:EigProblemLargeApprox}.

Now consider the eigenvectors. Left multiplying \eqref{eqn:EigProblemSmall} with $\Phi$ yields
\begin{align}\label{eqn:EigProblemLargeApprox2}
\Phi W \Phi^\T \Phi w_k =\tilde{\beta}_k \Phi w_k.
\end{align}
Substituting $\Phi w_k$ in \eqref{eqn:EigProblemLargeApprox2} by \eqref{eqn:EigVectorLargeFromSmall} yields \eqref{eqn:EigProblemLargeApprox}. 
\end{proof}

Note that \eqref{eqn:InnerProductL2} and \eqref{eqn:OrthoNormality} imply $\Phi^\T\Phi= \text{diag}(\nicefrac{1}{\Delta V},\ldots,\nicefrac{1}{\Delta V})$.

\section{Application to the drying process of wood chips}\label{sec:ApplicationToWoodChip}

We apply the model reduction procedure presented in section~\ref{subsec:modelreduction} to the drying problem introduced in section~\ref{sec:WoodChipModelling}. We evaluate the ROM in section~\ref{subsec:ROMevaluation} and analyze the controllability in section~\ref{subsec:CTRLDryingProcess}. The influence of the degree of reduction is addressed in section~\ref{subsec:ComparisonOfROMs}. 

\subsection{Reduced order model evaluation}\label{subsec:ROMevaluation}

\begin{table}[t]
\begin{center}
\caption{Simulation conditions for the drying process of  wood chips}\label{tb:CFDsimulations}
\begin{tabular}{llr}
initial wood chip moisture & $x(t=0)$ & \unit[0.8]{kg/kg}\\ 
initial wood chip temperature & $T(t=0)$ & \unit[298.15]{K}\\
simulation duration & &  \unit[1100]{s}\\
number of grid points & $N$ &  \unit[1000]{}\\
number of snapshots & $m$ &  \unit[100]{}\\
\textbf{case A:} &&\\
ambient temperature & $T_\infty(t<0)$ & \unit[298.15]{K}\\
& $T_\infty(t\geq0)$ & \unit[373.15]{K}\\
\textbf{case B:} &&\\
ambient temperature & $T_\infty(t<0)$ & \unit[298.15]{K}\\
& $T_\infty(t\geq0)$ & \unit[335.65]{K}\\
\end{tabular}
\end{center}
\end{table}

We determine snapshots for the temperature and moisture from a simulation of \eqref{eqn:PDE} for the conditions stated in table \ref{tb:CFDsimulations}, case A. These conditions represent a typical drying process where an initially wet wood chip at room temperature is exposed to hot dry air until a steady state is reached after approximately $\unit[1100]{s}$. 
The drying process can be modeled by applying a step function to the ambient temperature $T_\infty(t)$ with $T_\infty(t<0)=\unit[298.15]{K}$ and final temperature $T_\infty(t\geq 0)=\unit[373.15]{K}$. 

We determine the modes $\varphi_{x,l}(y_i)$, $\varphi_{T,k}(y_i)$ and coefficients $c_{x,l}(t)$, $ c_{T,k}(t)$, $l=1,\ldots,n_x$, $k=1,\ldots,n_T$  so that \eqref{eqn:SumLinApprox} and \eqref{eqn:SumLinApproxWater} yield approximations for the temperature and moisture, respectively. 
We select an appropriate order $n$ of the reduced model by analyzing the approximation error for the total moisture $X$ introduced in~\eqref{eqn:overallwatercontent}. 
We use the total moisture $X$ for this purpose, since the optimal control problem for the drying process treated in section~\ref{sec:OCP} requires a terminal constraint on $X$. Specifically, we determine the normalized root-mean-square error (NRMSE) for the total moisture
\begin{align}\label{eqn:MeanRelError}
\varepsilon(n)=\frac{\sqrt{ \frac{1}{m} \sum_{j=1}^{m}\Big(X(t_j)-\frac{1}{N}\textstyle\sum_{i=1}^N\big( \sum_{l=1}^n \varphi_{x,l}(y_i)c_{x,l}(t_j)+\bar{x} (y_i) \big) \Big)^2} }
{\max_{j} X(t_j) - \min_{j} X(t_j)}
\end{align}
with respect to simulation results for the finite-volume model~\eqref{eqn:NonlinSysAllg}. 
The error \eqref{eqn:MeanRelError} is shown in Figure~\ref{fig:ErrorTotalXforModes} as a function of the cut-off value $n$. As expected, the error decreases with increasing number of modes from  $\varepsilon(6)=0.02\%$ to $\varepsilon(50)=8.8\times10^{-11}\%$. We consider only orders $n\ge 6$ here and in the remainder of the paper, because the integration of the ROM for order $n=4$ was unstable. 
We anticipate we choose $n_x=n_T=3$, thus $n=6$, after showing that the corresponding reduced model is not only sufficiently accurate, but also has the required controllability properties in section~\ref{subsec:CTRLDryingProcess}. 

\begin{figure}[t]
        \centering
    \includegraphics[width=0.7\textwidth]{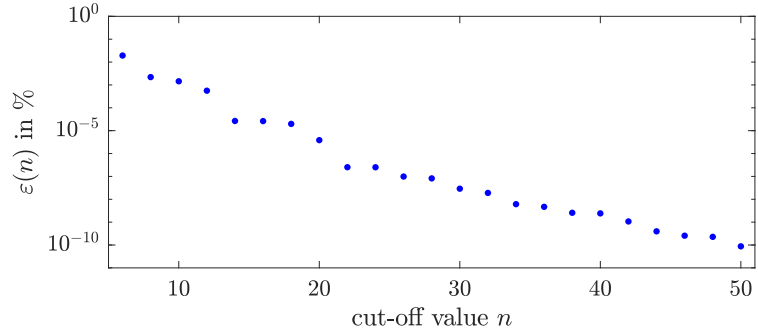}
               \caption{Normalized-root-mean square error over all times for the total moisture $X(t)$ for cut-off values $n=6,8,10,\ldots,50$. Note the semi-logarithmic scale.}
        \label{fig:ErrorTotalXforModes}
\end{figure}

\begin{figure}[b!]
        \centering
\subfigure{\includegraphics[width=0.35\textwidth]{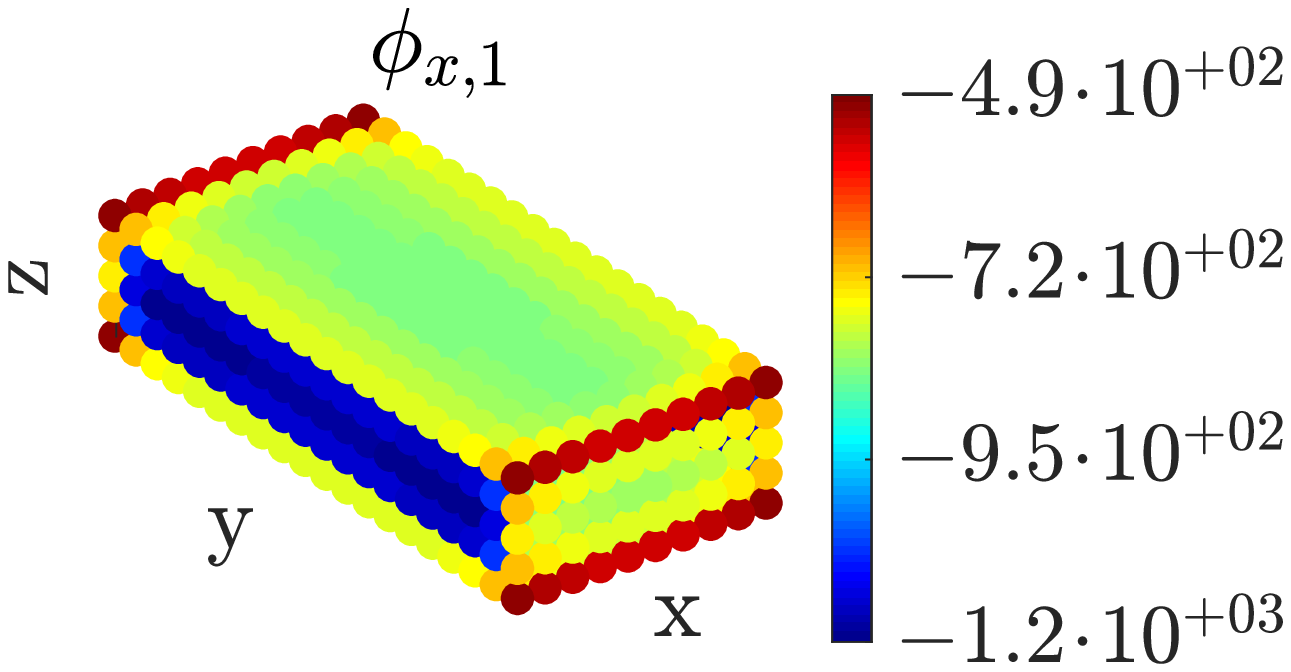}}
\subfigure{\includegraphics[width=0.35\textwidth]{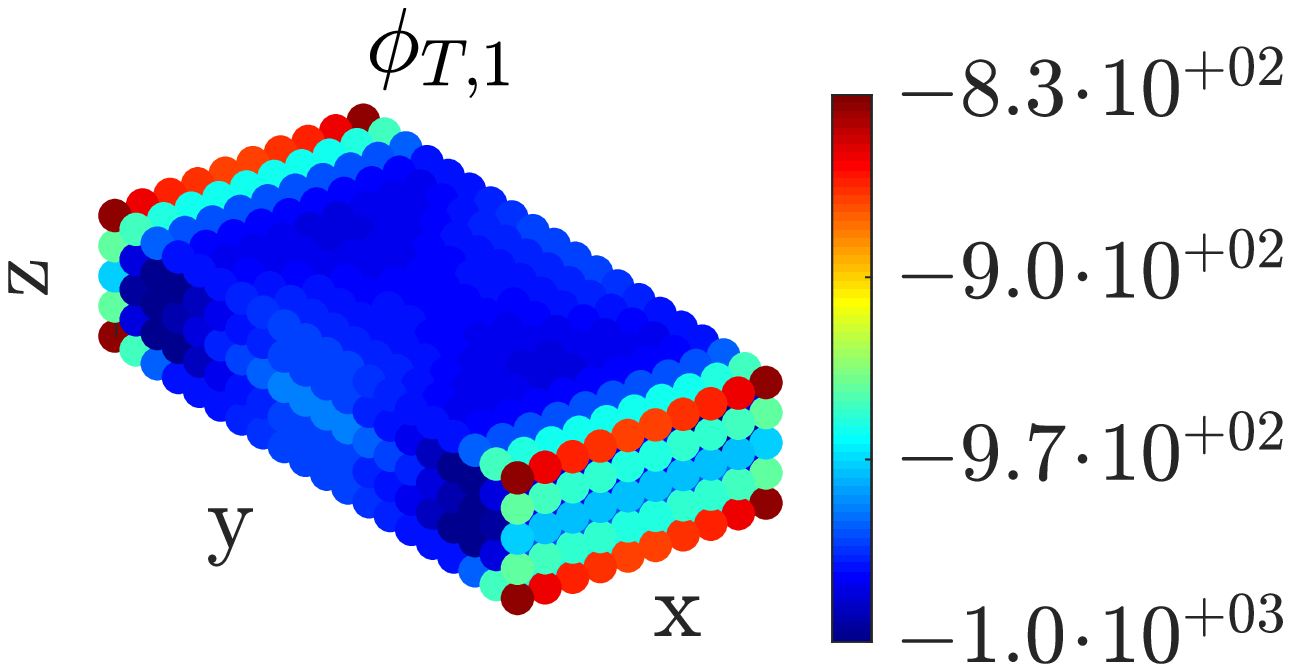}}
        \caption{First mode for the moisture $\phi_{x,1}$ (left) and temperature $\phi_{T,1}$ (right) obtained by performing a POD according to section \ref{subsec:modelreduction}. }
        \label{fig:PODModes}
\end{figure}

The first modes $\phi_{x,1}$ and $\phi_{T,1}$ and the coefficients $c_{x,l}(t_j)$, $c_{T,k}(t_j)$, $k=1,\ldots,3$, are shown in Figures \ref{fig:PODModes} and \ref{fig:PODCoeff} (red crosses), respectively.  
We stress that all simplifications that were used for explanatory reasons in section \ref{subsec:modelreduction} do not apply here in chapter \ref{sec:ApplicationToWoodChip}. Specifically, the material parameters $s$, $\lambda$ and $\delta$ are nonlinear functions of the local moisture or temperature approximations and the moisture and heat diffusion coefficients $\lambda$ and $\delta$ are of dimension $\R^{3 \times 3}$ (see appendix A).

\begin{figure}[tb]
        \centering
\subfigure{\includegraphics[width=0.35\textwidth]{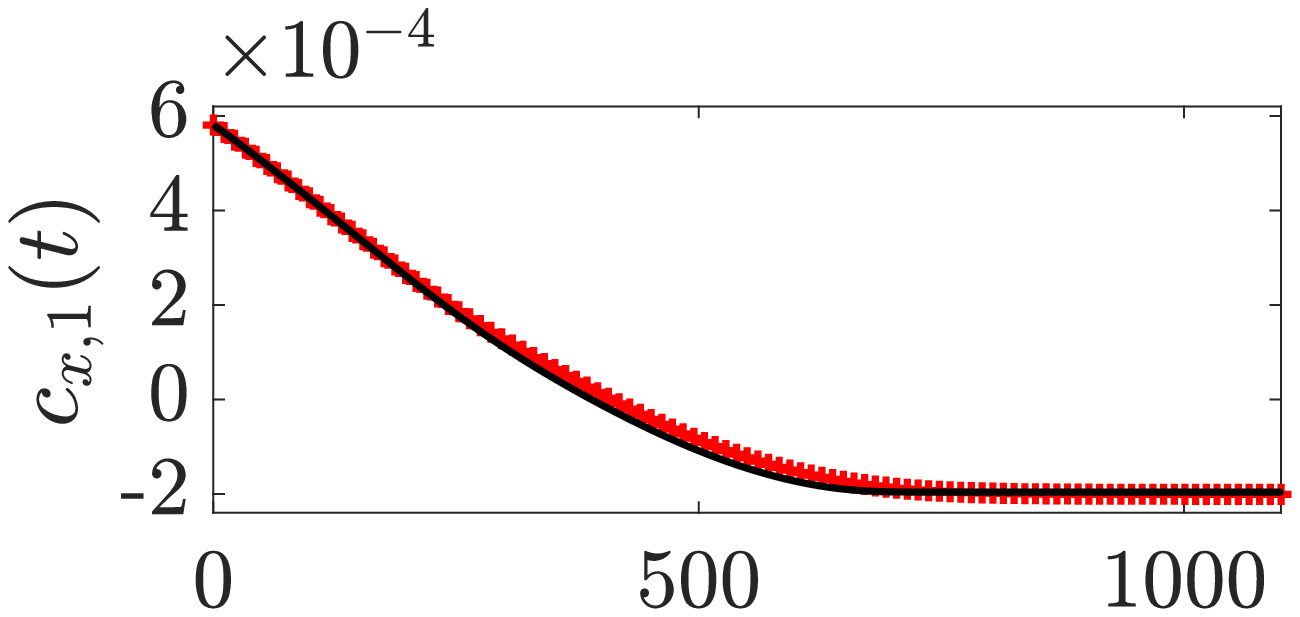}}
\subfigure{\includegraphics[width=0.35\textwidth]{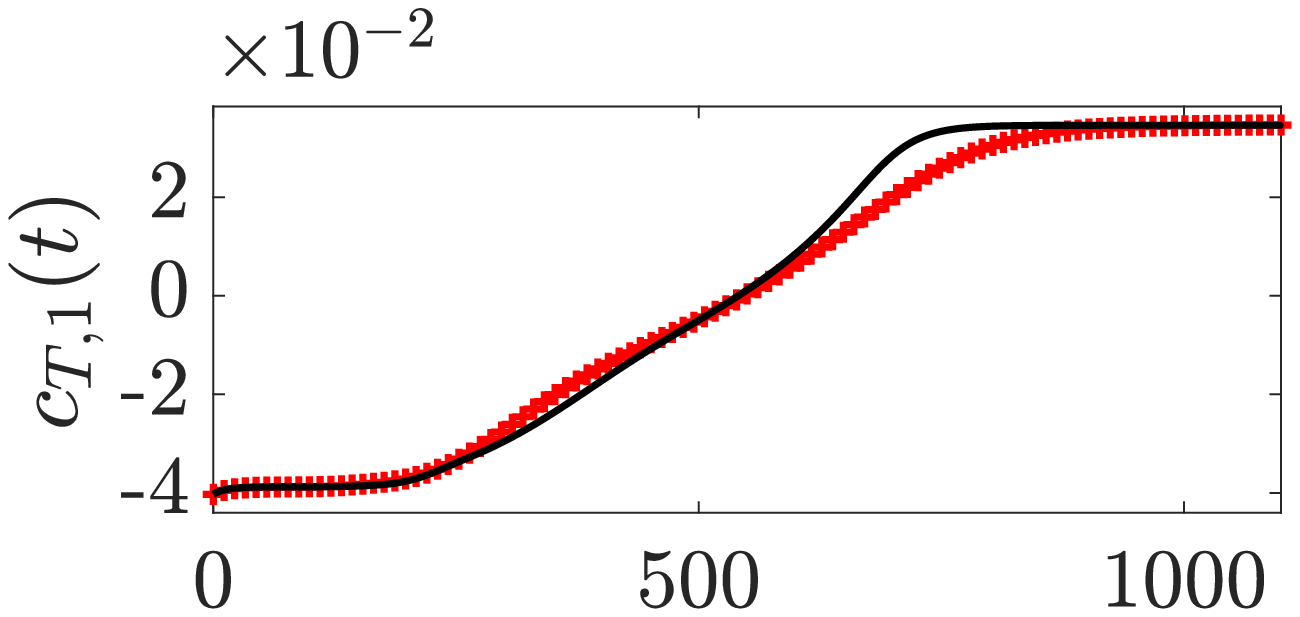}}\\
\subfigure{\includegraphics[width=0.35\textwidth]{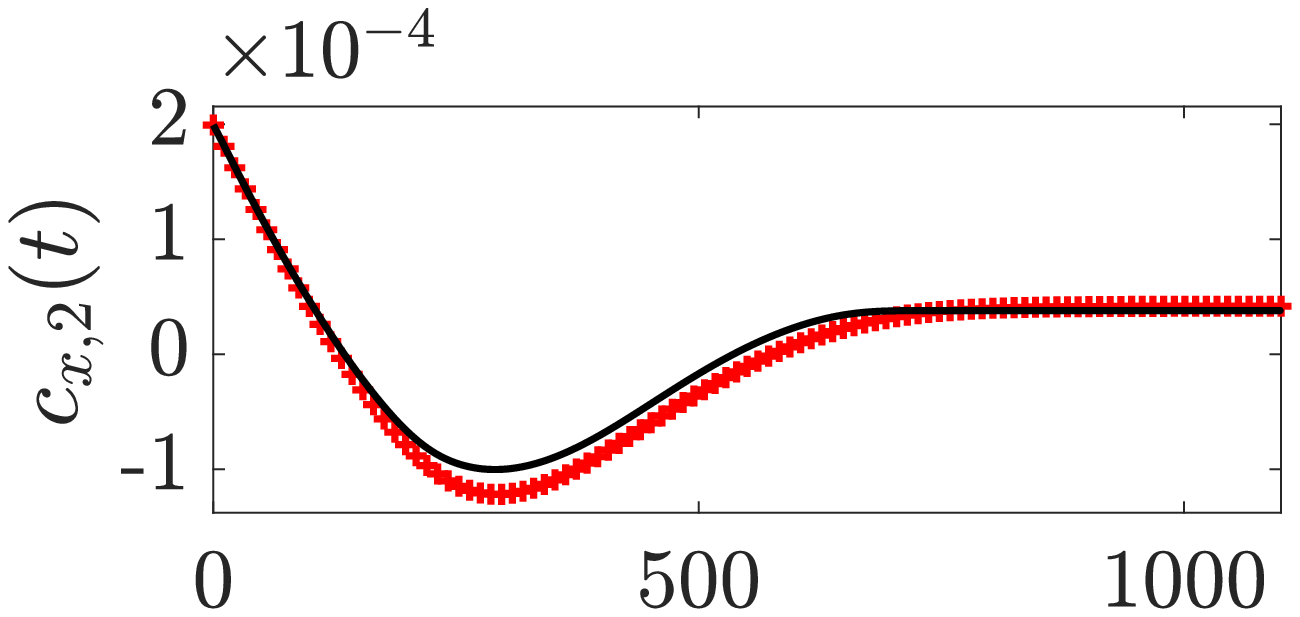}}
\subfigure{\includegraphics[width=0.35\textwidth]{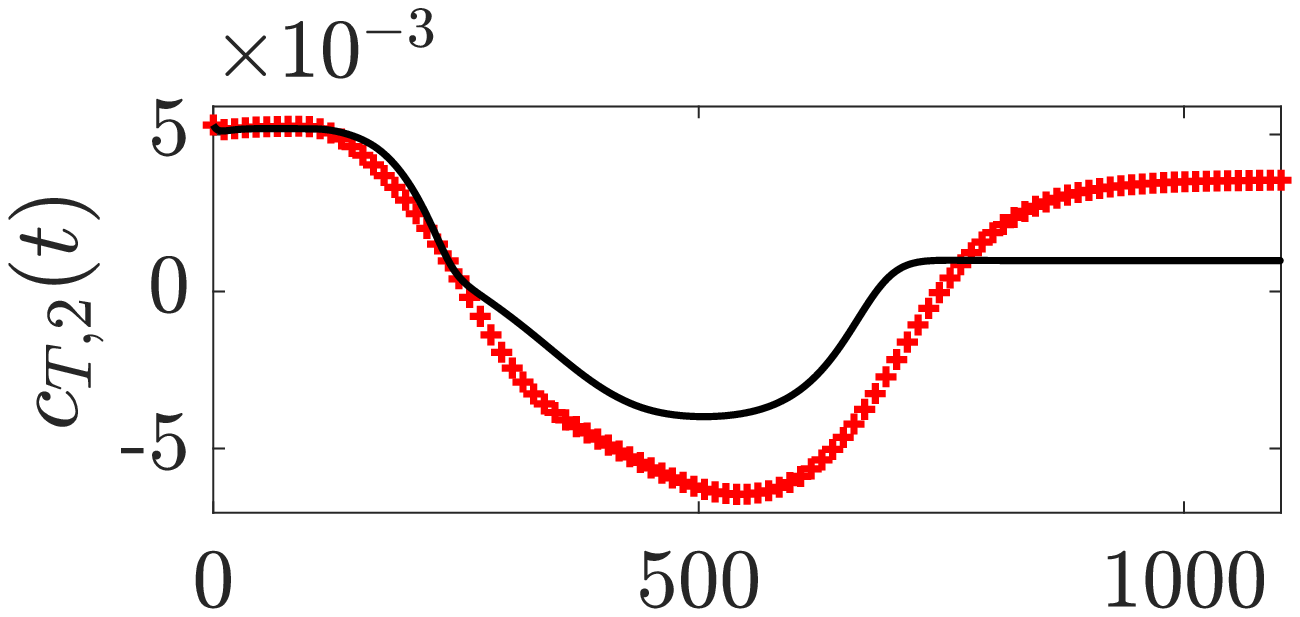}}\\
\subfigure{\includegraphics[width=0.35\textwidth]{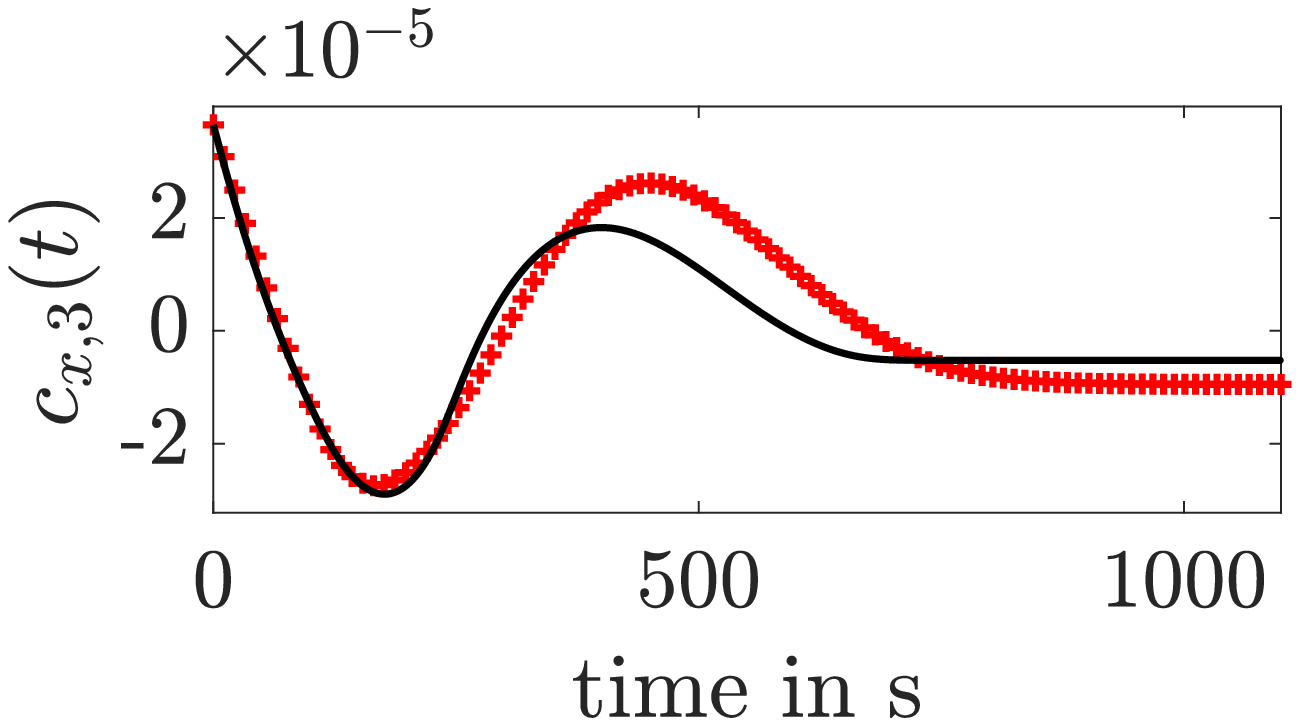}}
\subfigure{\includegraphics[width=0.35\textwidth]{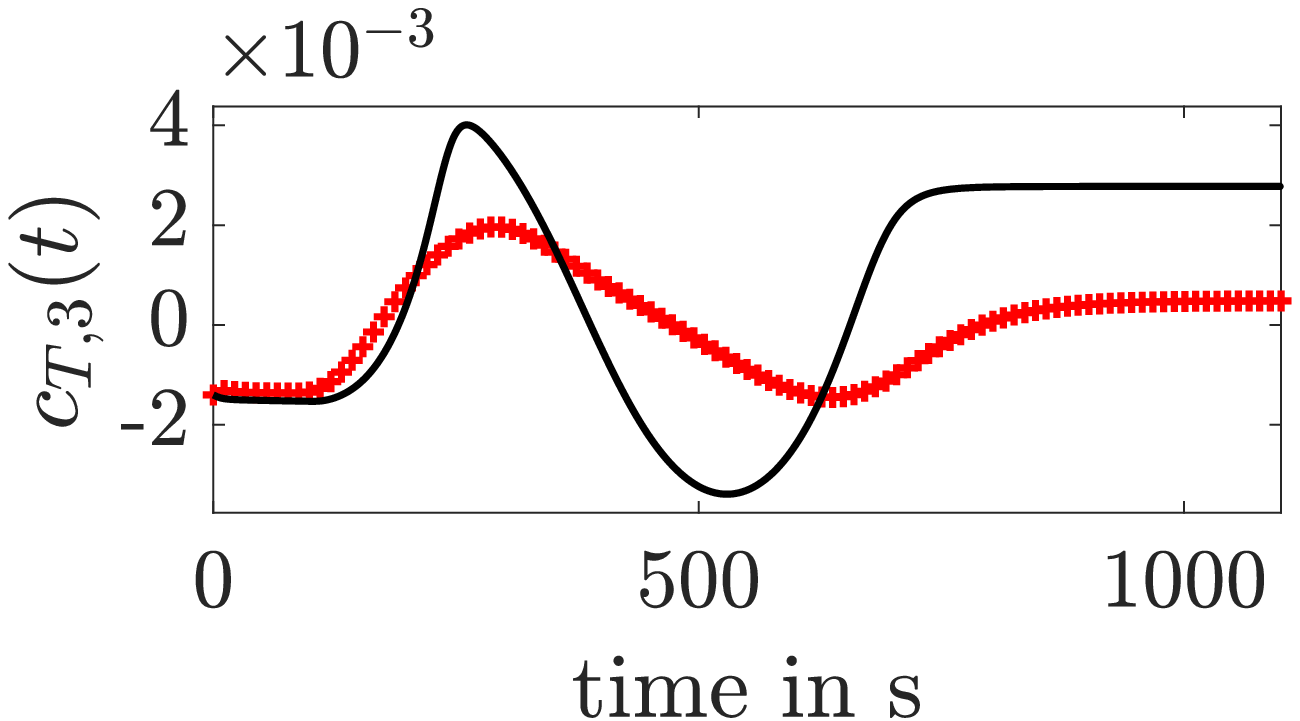}}
               \caption{Coefficients from approximation \eqref{eqn:SumLinApproxWater} (left) and \eqref{eqn:SumLinApprox} (right). The time-continuous results of the ROM with $n=6$ (solid black) are compared to time-discrete coefficients (red dots) from the original simulation of~\eqref{eqn:PDE}.}
        \label{fig:PODCoeff}
\end{figure}

We further check if the ROM represents the drying behavior of the wood chip reasonably well by analyzing the temporal and spatial behavior. Figure~\ref{fig:PODCoeff} compares the time-discrete coefficients obtained by the POD \eqref{eqn:SumLinApprox} and \eqref{eqn:SumLinApproxWater} (red crosses) to the time-continuous coefficients that result from solving the ODE system \eqref{eqn:ROMGauss} and the corresponding system for the moisture (black lines) for a step of the ambient temperature to $T_ \infty(t\geq 0)=\unit[373.15]{K}$. Some deviations occur for higher-order modes, but the most important modes match very well.
Furthermore, we determine the error
\begin{align}\begin{aligned}\label{eqn:ErrorDistribution}
x(y_i,t_j)&-\big(\bar{x}(y_i)  + \textstyle\sum_{k=1}^{n_x} \varphi_{x,k}(y_i) c_{x,k}(t_j)\big) \\
T(y_i,t_j)&-\big(\bar{T}(y_i)  + \textstyle\sum_{k=1}^{n_T} \varphi_{T,k}(y_i) c_{T,k}(t_j)\big).
\end{aligned}
\end{align}
to compare the spatial error of the moisture and temperature distribution inside the wood chip.
The maximum absolute error over all times and spatial locations is $\unit[24.3]{K}$ at time $t_j=\unit[704]{s}$ for the temperature and $\unit[0.094]{\nicefrac{\text{kg}}{kg}}$ at time $t_j=\unit[550]{s}$ for the moisture. The NRMSE~\eqref{eqn:MeanRelError} for the temperature and moisture distribution are $\varepsilon_T(n=6)=5.6\%$ and $\varepsilon_x(n=6)=2.5\%$, respectively. We repeated the analysis for all impulse responses used in section~\ref{subsec:CTRLDryingProcess} (as opposed to the step responses discussed in the present section). The NRMSE for temperature and moisture amount to about $0.5\%$ and $1.3\%$ in these cases.

\begin{figure}[t]
        \centering
\subfigure{\includegraphics[width=0.35\textwidth]{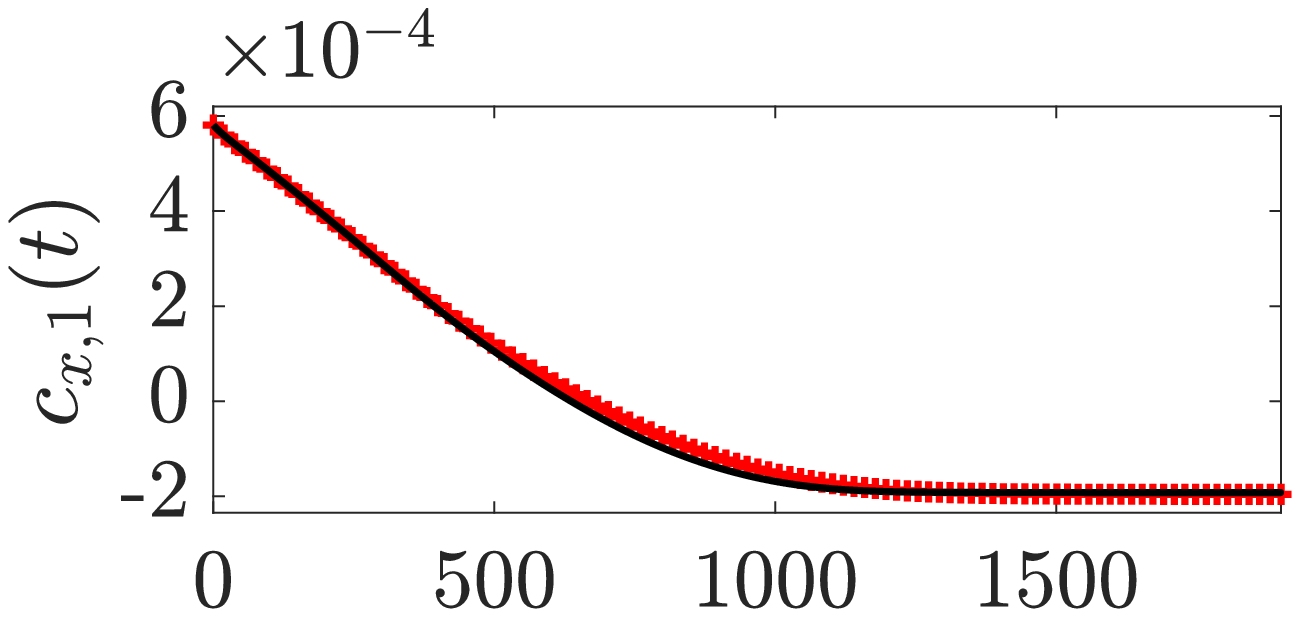}}
\subfigure{\includegraphics[width=0.35\textwidth]{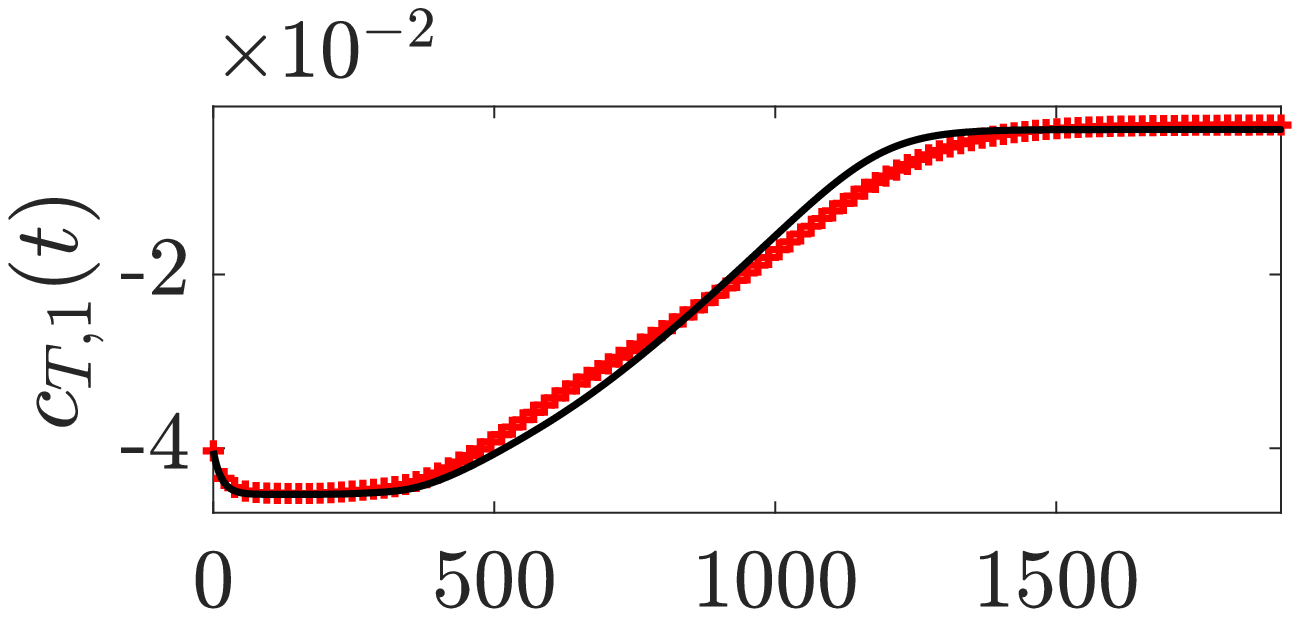}}\\
\subfigure{\includegraphics[width=0.35\textwidth]{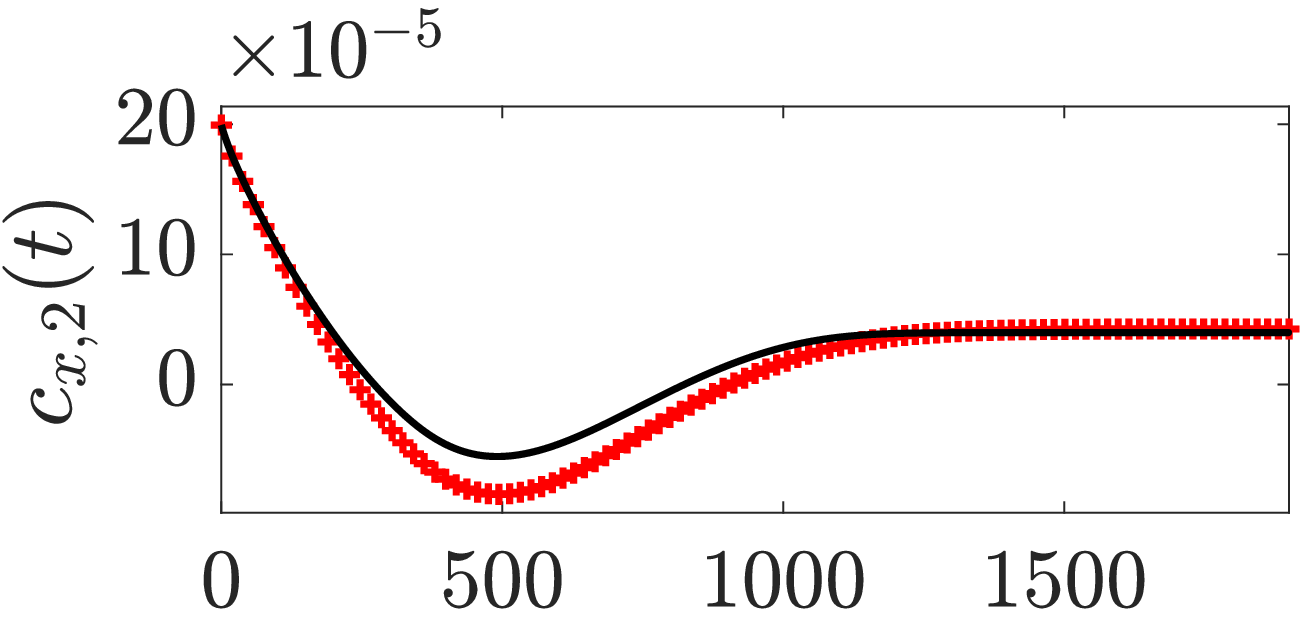}}
\subfigure{\includegraphics[width=0.35\textwidth]{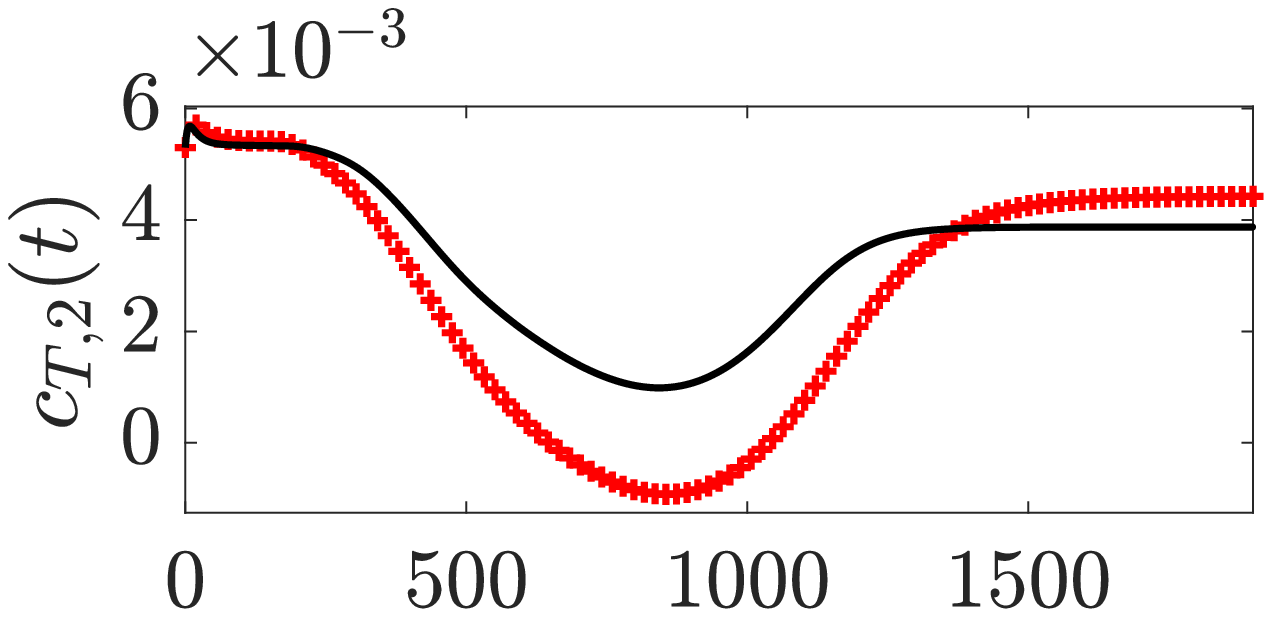}}\\
\subfigure{\includegraphics[width=0.35\textwidth]{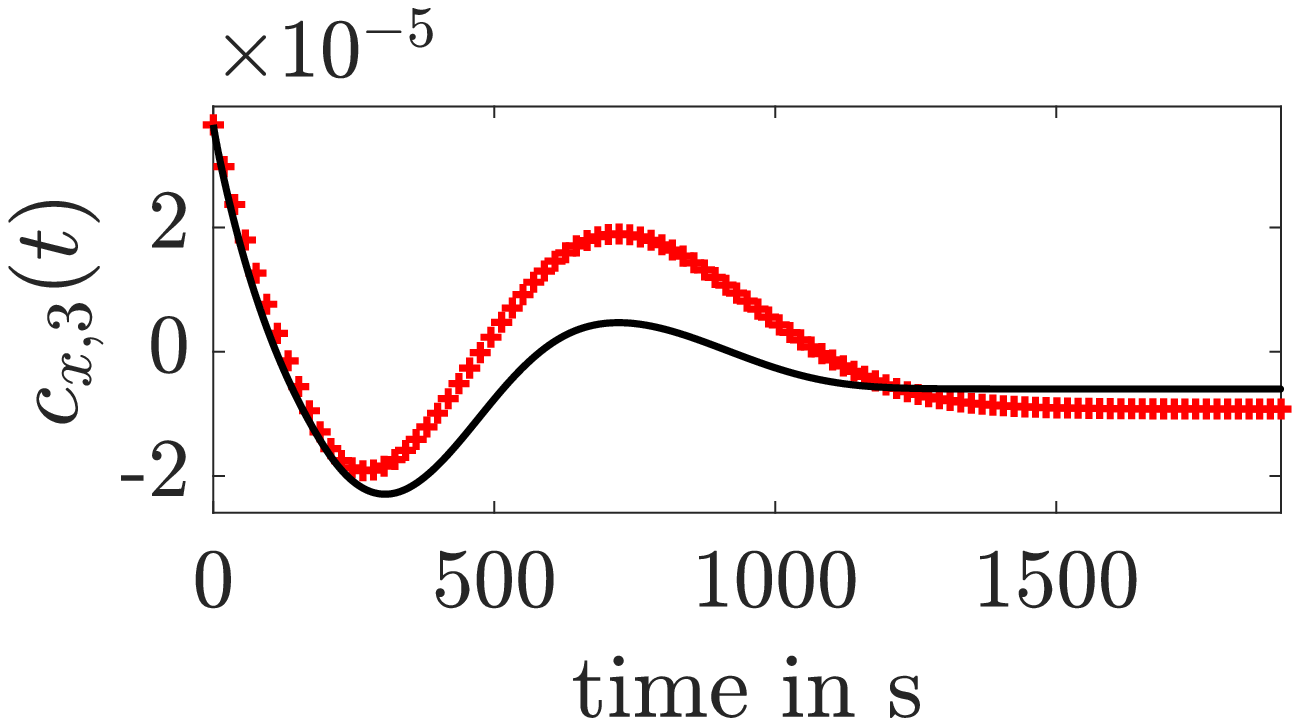}}
\subfigure{\includegraphics[width=0.35\textwidth]{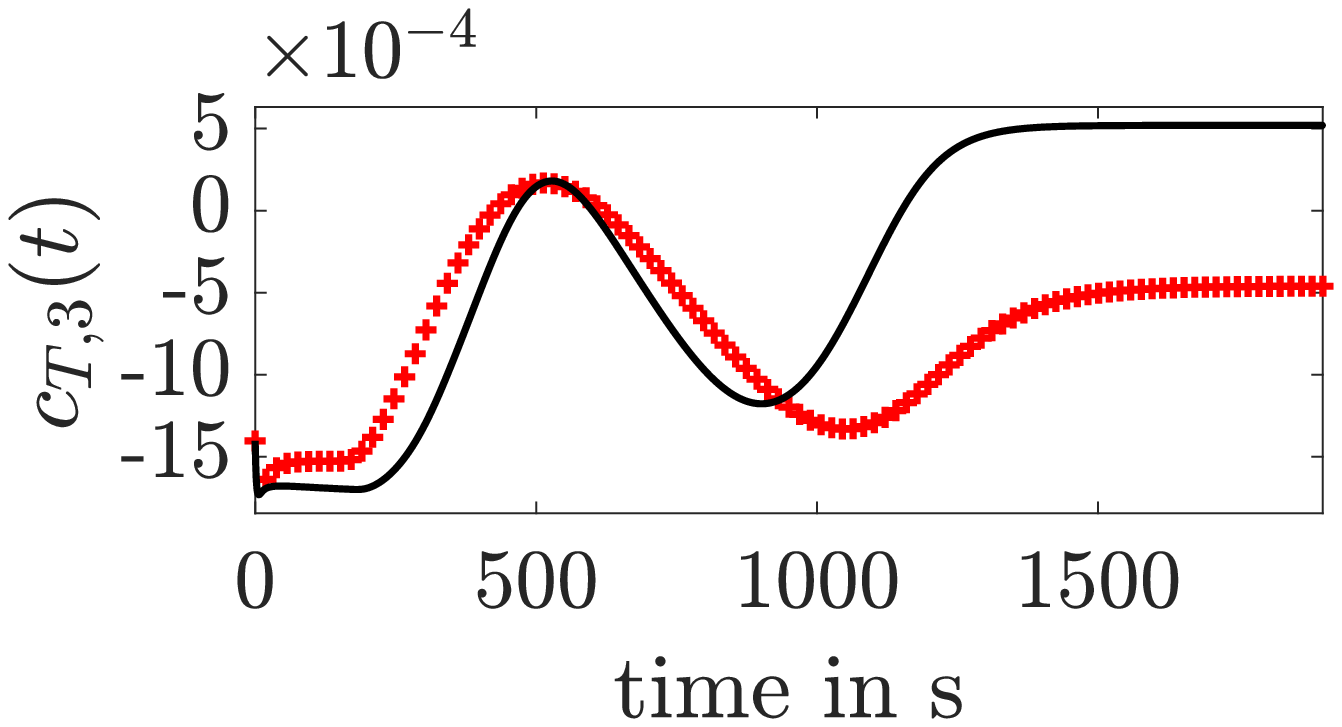}}
               \caption{The time-continuous results of the ROM for case B (solid black) are compared to time-discrete coefficients (red dots) that result from the original simulation of~\eqref{eqn:PDE} with a changed ambient temperature.}
        \label{fig:PODCoeffOtherBC}
\end{figure}

\begin{figure}[b]
        \centering
                       \includegraphics[width=0.7\textwidth]{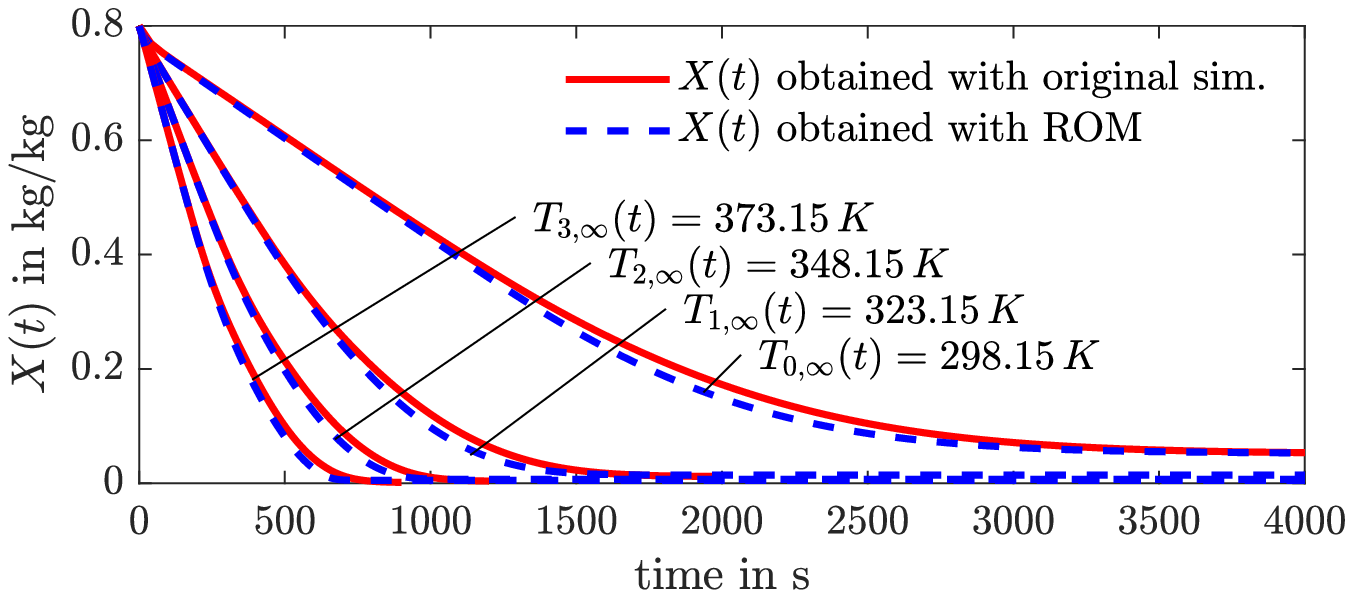}
               \caption{Total moisture $X(t)$ obtained with the ROM (dashed blue lines) and with the original simulation of~\eqref{eqn:PDE} (solid red lines) for the conditions stated in table~\ref{tb:CFDsimulations} and several ambient temperatures. We note for use in section~\ref{subsec:OCPWoodChip} that the maximum absolute error amounts to $\unit[2.4\times 10^{-2}]{kg/kg}$. It occurs for $T_\infty=\unit[298.15]{K}$ and $t=\unit[1920]{s}$.}
        \label{fig:TotalWaterCmp}
\end{figure}

We check if the ROM is also capable of representing the moisture and temperature inside a wood chip for significantly different than the design boundary conditions. 
This becomes crucial when the ROM is used in an optimization scheme where the ambient conditions are altered. 
We determine a ROM for the conditions stated in table \ref{tb:CFDsimulations} case A and apply the ambient temperature of case B. 
The time coefficients of the ROM are shown in Figure \ref{fig:PODCoeffOtherBC} (black lines). 
Just for comparison reasons we carry out a simulation of the full model~\eqref{eqn:PDE} for the conditions of case B and determine the time discrete coefficients (red dots). 
We observe that some deviations occur for higher order modes but the most important modes match acceptably well. 
We stress again that the full simulation for case B was not used to determine a ROM but only to determine the time coefficients for comparison reasons. 

Finally, we validate the ROM by comparing the total moisture $X(t)=\frac{1}{N}\textstyle\sum_{i=1}^N x(y_i,t)$ obtained with the ROM to the result of the original simulation for different step heights of the ambient temperature. Specifically, we choose $T_\infty(t\geq 0) \in \{ \unit[298.15]{K}, \unit[323.15]{K}, \unit[348.15]{K},$ $\unit[373.15]{K}\}$. We analyze the total moisture, because this quantity is required in the optimal control problem presented in section \ref{sec:OCP}. The approximation of the total moisture by the ROM is shown in Figure \ref{fig:TotalWaterCmp} (dashed blue lines). Minor deviations occur in the middle of the drying process. We claim that this approximation is sufficiently accurate for the use in an optimal control problem. 
Note that the ROM was determined only from simulation results for the full model~\eqref{eqn:PDE} for $T_\infty(t\geq 0) = \unit[373.15]{K}$. The simulation results for $T_\infty(t\geq 0) \in \{ \unit[298.15]{K}, \unit[323.15]{K}, \unit[348.15]{K}\}$ were only used for the validation. 

We briefly note that the approximation error of the ROM is not negligible but acceptable, since it has the same order of magnitude as the approximation error of the full model~\eqref{eqn:PDE} itself. In \cite{Sudbrock2014}, the drying behavior of a single sphere-shaped wood particle was determined experimentally and compared to simulations with the full model. While these results cannot be compared to the results obtained here due to the different particle geometry, a comparison of the approximation errors is still useful. 
The NRMSE between the simulations and experimental results amounts to $6.3\%$ for the drying rate. In comparison, the ROM of order $n=6$ in section 4.1 results in a NRMSE of $3.1\%$ with respect to the original simulation data.
We conclude the ROM represents the wood chip drying process sufficiently accurately, since the error due to the model reduction is smaller than the modeling error.

\subsection{Controllability of the drying process}\label{subsec:CTRLDryingProcess}
We apply the empirical controllability Gramian as introduced in sections~\ref{subsec:CtrbGramROM} to the ROM of section~\ref{subsec:modelreduction}. 
Specifically, we apply the control input \eqref{eqn:GramImpulseInput} with $h_d\in\{ 10^{-3},10^{-2},10^{-1}, 10^{0},\\10^{1},10^{2},10^{3}\}$, i.e., $s= 7$,  
to the ROM from section~\ref{subsec:ROMevaluation} in order to approximate the controllability Gramian (11) by $W$ according to proposition~\ref{Prp:ApproxGram}. 
The ROM is initialized in the steady state for $u_0=\unit[298.15]{K}$. 
We choose the values for $h_d$ listed above to cover $7$ orders of magnitude. 
The remaining parameters of the control input function~\eqref{eqn:GramImpulseInput} read 
$D_l = 1$, $l= 1$ and $e_i=1$, $i=1$, since only $\gamma=1$ input exists in this case.
Solving the ROM \eqref{eqn:ROMGauss} for each $h_d$ yields the desired time-series $c^{dli}(t)$ for the state variables~\eqref{eqn:PODGalStates} of the ROM and their steady states $c^{dli}_{\text{ss}}$. We write $c^d(t)$ short for $c^{d11}(t)$ and $z_{d}(t)$ for $z_{d11}(t)$ below.

We use the coefficients $c^d(t)$ to determine 
\begin{align}\label{eqn:CtrbGramROMDiscrete}
W^*= \sum_{d=1}^s\frac{1}{sh_d^2}\sum_{j=0}^{m_\text{f}}\big(c^d(t_j)-c^d(t_{m_\text{f}}) \big)\big(c^d(t_j)-c^d(t_{m_\text{f}}) \big)^\T   \Delta t,
\end{align}
i.e., the discrete-time representation of \eqref{eqn:CtrbGramROM} where the integral in \eqref{eqn:CtrbGramROM} is approximated by a sum with $m_\text{f}=15 \cdot 10^6$ time steps of step size $\Delta t=\unit[0.001]{s}$. The parameters $m_\text{f}$ and $\Delta t$ were chosen such that an increase of one order of magnitude of the discretization time results in a change of less than $1\%$ for \eqref{eqn:CtrbGramROMDiscrete} and such that $c^d(t_{m_\text{f}})$ is the steady state $c^d_\text{ss}$. 
We have $W^*\in\R^{6 \times 6}$, since the ROM is of order $n=6$.

\begin{figure}[t]
        \centering
\subfigure{\includegraphics[width=0.35\textwidth]{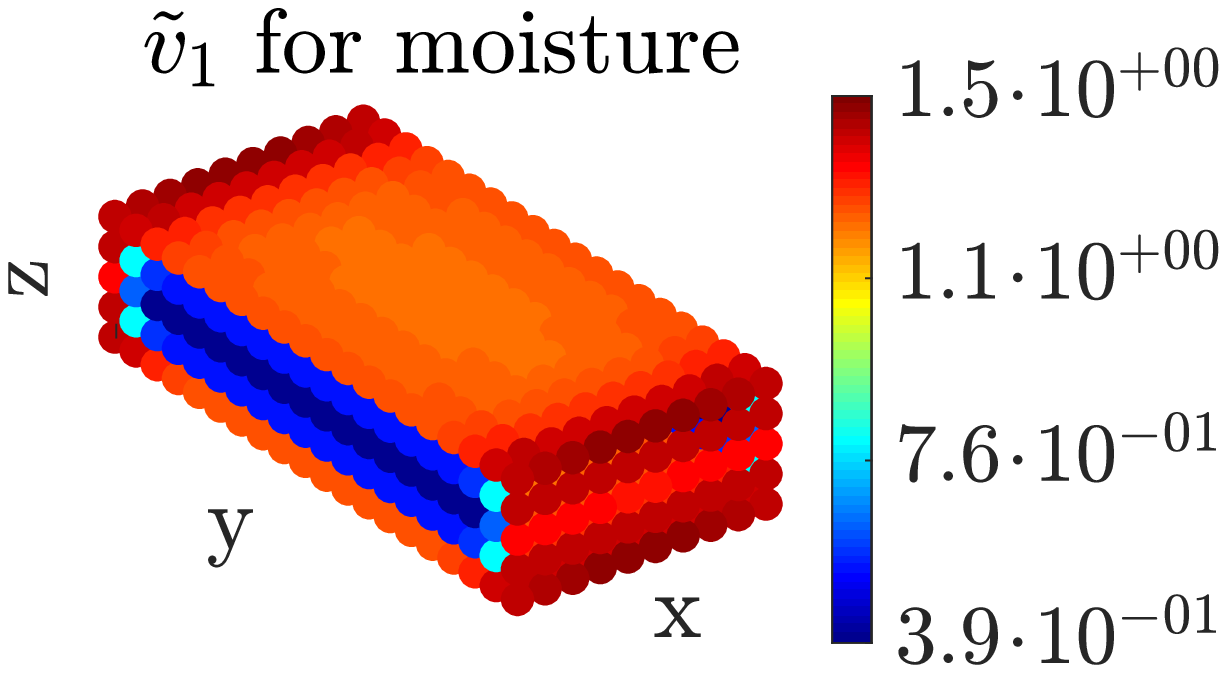}}
\subfigure{\includegraphics[width=0.35\textwidth]{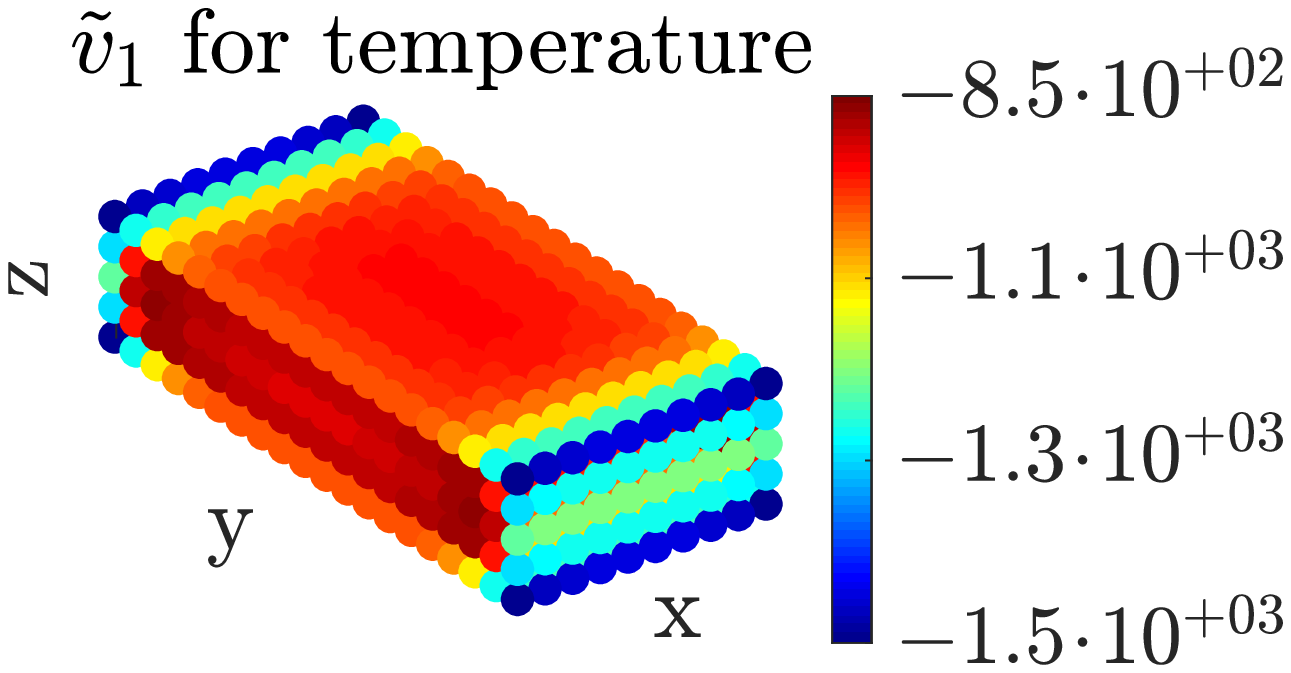}}
        \caption{Approximation of the first eigenvector $\tilde{v}_{1}$ determined with a ROM of order $n=6$.} 
        \label{fig:MostControllableModes}
\end{figure}

\begin{table}[b]
\begin{center}
\caption{Eigenvalues of the empirical controllability Gramian for a ROM of order $n=6$}\label{tb:EigVal}
\begin{tabular}{rcl}
$\tilde{\beta}_1$ & $=$ & $6.91$ \\
$\tilde{\beta}_2 $ & $=$ & $  2.06 \times 10^{-1}$ \\
$\tilde{\beta}_3 $ & $=$ & $  5.72 \times 10^{-3}$ \\
$\tilde{\beta}_4 $ & $=$ & $  9.34 \times 10^{-6}$ \\
$\tilde{\beta}_5 $ & $=$ & $  1.54 \times 10^{-6}$ \\
$\tilde{\beta}_6 $ & $=$ & $  3.16 \times 10^{-8}$
\end{tabular}
\end{center}
\end{table}

The eigenvalues $\tilde{\beta}_k$, $k=1,\ldots,6$ are determined with~\eqref{eqn:EigProblemSmall} and listed in table \ref{tb:EigVal}. We find that $\tilde{\beta}_k > 0$ for all $k=1,\ldots,6$. According to Lemma \ref{lma:LinearCtrbCond}, this indicates that the nonlinear ROM of section \ref{subsec:modelreduction} is controllable. We conclude that the control input $u(t)=T_\infty(t)$ is a reasonable choice to control the states of the ROM. However, we cannot infer the $M$-dimensional finite-volume model~\eqref{eqn:NonlinSysAllg} to be controllable or not, since $\tilde{\beta}_k$ are the approximations for only some eigenvalues of the larger controllability Gramian \eqref{eqn:CCM}. It is possible that \eqref{eqn:CCM} has zero eigenvalues and \eqref{eqn:CtrbGramROMDiscrete} has not. In fact, we expect that the detailed model is not fully controllable, since the wood chip drying problem, the particle volume $\Omega$, the boundary conditions and the spatially dependent material parameters are symmetric. Due to this symmetry, arbitrary moisture and temperature distributions are not possible. However, we can determine the controllable subspace according to Lemma \ref{lma:CtrlSubspace} using the eigenvectors of \eqref{eqn:CtrbGramROMDiscrete}. The eigenvectors \eqref{eqn:LinApproxEig} approximate the controllable subspace of the large model. The eigenvector $\tilde{v}_1$ indicating the most controllable direction is shown in Figure \ref{fig:MostControllableModes} for illustration.

We claim the ROM is suitable for controlling the moisture and temperature distribution, since the ROM is controllable and its reachable states yield an approximation for the reachable moisture and temperature distribution of the detailed model \eqref{eqn:PDEdiscrete}.

\subsection{Controllability comparison of different reduced models}\label{subsec:ComparisonOfROMs}
As a final preparation, it remains to check if the controllability properties change when the order of the ROM is changed. Specifically, we check if the eigenvalues $\tilde{\beta}_k$ 
change for ROM of different order by repeating the analysis performed for $n=6$ in section~\ref{subsec:CTRLDryingProcess} for $n=6, 8, \dots, 50$, where $n=50$ is an arbitrary high number.
Figure~\ref{fig:SingularValuesGram} shows the eigenvalues $\tilde{\beta}_{k,n}$, where the subscript $k, n$ refers to the $k$-th eigenvalue of the Gramian $W_n^*$. 
The eigenvalues appear in pairs and the smallest eigenvalue pairs decrease with increasing order of the ROM. All other eigenvalues remain nearly unchanged.
The new eigenvalues that appear when increasing the order from $n= 6$ to $n= 8$ are smaller than the leading ones by about four orders of magnitude. 
Consequently, the controllability properties already established in section~\ref{subsec:CTRLDryingProcess} for $n= 6$ do not improve for increased orders. 
Since $n= 6$ also proved to result in a sufficiently precise model in section~\ref{subsec:ROMevaluation}, we choose $n= 6$ for the ROM used in the optimal control problems.

\begin{figure}[t]
        \centering
        \includegraphics[width=0.7\textwidth]{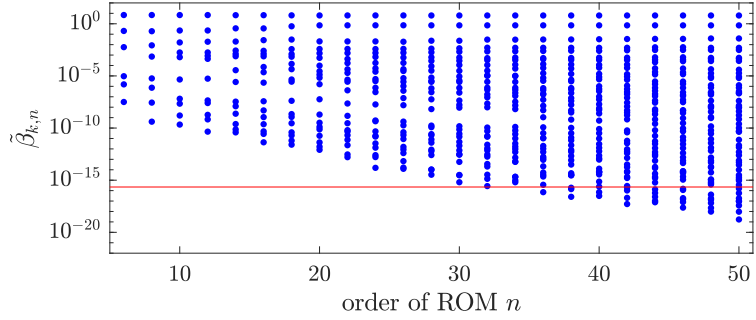}
               \caption{Eigenvalues $\tilde{\beta}_{k,n}$, $k=1,\ldots,n$ of the empirical controllability Gramian $W_n^*$ for ROM orders $n=6, 10, \ldots,50$ (blue dots). The horizontal red line marks the floating-point accuracy. }
        \label{fig:SingularValuesGram}
\end{figure}

\section{Optimal Control Problem}\label{sec:OCP}

\subsection{Optimal control problem under consideration}\label{sec:consideredOCP}

We assume the ambient temperature $T_\infty(t)=u(t)$ to be the only control input. Furthermore, we assume that $u(t)$ is subject to bounds 
\begin{align}\label{eqn:inputconstraints}
  u_\text{min}<u(t)<u_ \text{max} \mbox{ for } t\in [0, t_\text{f}],
\end{align}
where the process starts at $t= 0$ without restriction, and where $t_\text{f}$ is a given end time of the drying process. 
It is our goal to find a control trajectory so that the total moisture \eqref{eqn:overallwatercontent} in the wood particle is less than a prescribed value $X_\text{f}$ at the end of the drying process. This is enforced by the terminal inequality constraint
\begin{align}
\label{eqn:terminalwatercontent}
X(t_\text{f}) \leq X_\text{f}.
\end{align} 
The cost function
\begin{align}
\label{eqn:costfunction}
J(u(\cdot))=\int_0^{t_\text{f}}u(t)- \unit[298.15]{K}\;\text{d}t
\end{align} 
serves as a simple model for the cost of energy. 

In summary, we seek the function $u:[0, t_\text{f}]\rightarrow\R$
that minimizes~\eqref{eqn:costfunction} subject to the input constraints~\eqref{eqn:inputconstraints}, the terminal constraint~\eqref{eqn:terminalwatercontent} for the integral moisture~\eqref{eqn:overallwatercontent}, and 
the dynamics~\eqref{eqn:PDE} with boundary and initial conditions \eqref{eqn:PDEBC} and \eqref{eqn:PDEInitCond}, respectively, where $t_\text{f}$ is a given end time. 

Since we cannot expect to find an analytic solution, the stated optimal control problem must be solved numerically. However, solving the OCP numerically with an embedded solver for the original model~\eqref{eqn:PDE} is tedious and computationally expensive. For this reason, the ROM presented in section~\ref{subsec:modelreduction} is used to approximate the PDEs in the optimal control problem stated above. This substitution results in the optimal control problem

\begin{equation}
\begin{aligned}
\label{eqn:ROMOptimizationProblem}
\min_{u(t_j),\, j=0, \dots, m} &\sum_{j=0}^{m}\big( u(t_j)-\unit[298.15]{K}\big)\Delta t\\
\text{subject to} & &&\\
\dot{c}_{x,k}(t) &= f_\text{ROM,x}\big(c_{x,k}(t),c_{T,l}(t)\big)\\
\dot{c}_{T,l}(t) &= f_\text{ROM,T}\big(c_{x,k}(t),c_{T,l}(t),u(t)\big)\\
c_{x,k}(t=0)&=c_{x,k}(t_0),\\
c_{T,l}(t=0)&=c_{T,l}(t_0),\\
x(y_i,t_\text{f}) &= \bar{x}(y_i)+ \textstyle\sum_{k=1}^{n_x} \varphi_{x,k}(y_i) c_{x,k}(t_\text{f})\\
X(t_\text{f})&=\frac{1}{N}\sum_{i=1}^{N} x(y_i,t_\text{f})\\
X(t_\text{f}) &< X_\text{f}\\
u_\text{min}&<u(t_j)<u_\text{max}
\end{aligned}
\end{equation}
with $k=1,\ldots,n_x$, $l=1,\ldots,n_T$, $i=1,\ldots,N$, $j=0,\ldots,m$ and where $f_\text{ROM,x}(c_{x,k}(t),c_{T,l}(t))$ and $f_\text{ROM,T}(c_{x,k}(t),c_{T,l}(t),u(t))$ refer to the right hand side of \eqref{eqn:ROMGauss} and its moisture equivalent. 
The input function $u(t)$ is discretized with zero-order hold and a step size of $\unit[1]{s}$, where $u(t_j)$, $j= 0, \dots, m$ with $m= 600$ steps will be required in section~\ref{subsec:OCPWoodChip}. 
The integral in \eqref{eqn:overallwatercontent} is approximated by a sum and the ODEs are solved with an explicit Euler integration with step size \unit[1]{s}.

\subsection{Optimal control results for the drying of wood chips}\label{subsec:OCPWoodChip}

We determine the optimal input sequence $u(t_j)$, $j= 0, \dots, m$, with $m=600$, for the drying process with 
a target moisture of $X_\text{f}=\unit[1\times 10^{-1}]{\nicefrac{\text{kg}}{kg}}$ and $t_\text{f}=\unit[600]{s}$. 
The bounds on the input read $u_ \text{min}=\unit[298.15]{K}$ and $u_\text{max}=\unit[373.15]{K}$. 
We choose $n_x=n_T=3$, thus $n=6$, for the order of the ROM. 
We use an interior-point algorithm to solve the resulting finite-dimensional optimization problem.\footnote{Matlab's \textit{fmincon} required $\unit[12858]{s}$ on an i7-6700 CPU at 3.40GHz.}
Since this algorithm is not guaranteed to find the global minimum, but in general terminates at a local minimum,
we solved the optimal control problem for $5$ constant temperature profiles with $T\in [\unit[298.15]{K}, \unit[373.15]{K}]$. The same optimal solution resulted in all cases.

The solution to the optimal control problem~\eqref{eqn:ROMOptimizationProblem} 
is shown in Figure~\ref{fig:OCPoptimalTrajectory} (red line). 
It turns out to be a bang-bang solution with two heating and two resting periods. The control attains the upper bound during the heating periods $0\leq t \leq \unit[219]{s}$ and $\unit[390]{s} \leq t \leq \unit[591]{s}$ and the lower bound during the resting periods $\unit[219]{s} < t< \unit[390]{s}$ and $\unit[591]{s} < t < \unit[600]{s}$. The heating periods are located at the very beginning and almost at the end of the drying process. 

Bang-bang solutions are known to be optimal for simple cost functions like \eqref{eqn:costfunction} \cite[Ch. 7.4]{Naidu2002}.
Despite the simplicity of the cost function, the optimal control problem reveals it to be attractive not to heat the particles constantly. This result is physically meaningful, which can be seen as follows. 
Since evaporation takes place on the surface only, drying is faster on the particle surface and slower inside the particle. At some point during the drying process, the inner particle is still wet but the surface is already dry so that the evaporation rate drops and drying proceeds slowly. Keeping the ambient temperature low during this time saves energy and allows the moisture inside the particle to diffuse to the surface. Evaporation increases in the subsequent heating period and drying proceeds faster.

\begin{figure}[t]
        \centering
        \includegraphics[width=0.7\textwidth]{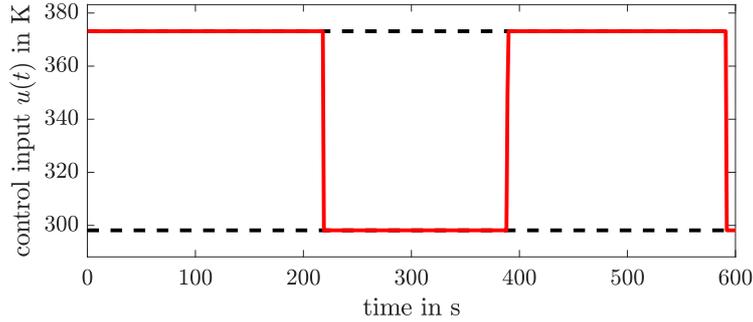}
               \caption{Optimal control trajectory obtained with a ROM of order $n=6$ (solid red line). The upper and lower bounds are $u_ \text{min}=\unit[298.15]{K}$ and $u_\text{max}=\unit[373.15]{K}$ (dashed black lines).  }
                       \label{fig:OCPoptimalTrajectory}
\end{figure}

The trajectory that results for the total moisture with the optimal $u(t)$ is shown in Figure~\ref{fig:TotalWaterCmpOCP} (solid red line). The Figure shows the result predicted by the ROM for comparison (dashed blue line). More specifically, the dashed blue line in Figure~\ref{fig:TotalWaterCmpOCP} is the moisture that results from integrating the ODEs for $c_{x,k}(t)$ in~\eqref{eqn:ROMOptimizationProblem} and determining $x(y_i, t)= \bar{x}(y_i)+\textstyle\sum_{k=1}^{n_x}\varphi_{x,k}(y_i) c_{x,k}(t)$ and $X(t)= \frac{1}{N}\sum_{i=0}^N x(y_i, t)$. 
For both the simulation with the PDEs and the ROM, the total moisture decreases from an initial value of about $X(t=0)=\unit[8\times 10^{-1}]{\nicefrac{\text{kg}}{kg}}$ and attains the desired target value of $X_\text{f}=\unit[1\times 10^{-1}]{\nicefrac{\text{kg}}{kg}}$ (marked by the dash-dotted black line) at $t_\text{f}=\unit[600]{s}$. As expected, 
the total moisture decreases faster during the heating periods and more slowly during the resting period. 
The ROM reaches the target value earlier than the original simulation. The absolute error in Figure~\ref{fig:TotalWaterCmpOCP} 
at $t=\unit[600]{s}$ amounts to $\unit[2.3\times 10^{-2}]{kg/kg}$ and thus is as large as the maximum absolute error for $X(t)$ found in section~\ref{subsec:ROMevaluation} (cf.\ Figure~\ref{fig:TotalWaterCmp}). 
Since the latter maximum absolute error is within the approximation precision of the original PDEs (cf.\ the last paragraph of section~\ref{subsec:ROMevaluation}), we consider the deviation of the trajectories at $t=\unit[600]{s}$ in Figure~\ref{fig:TotalWaterCmpOCP} to be acceptable. 
Note that we compare maximum absolute errors here as opposed to NRMSEs in section~\ref{subsec:ROMevaluation}, since we are interested in the maximum error in time here. 

\begin{figure}[t]
        \centering
                       \includegraphics[width=0.7\textwidth]{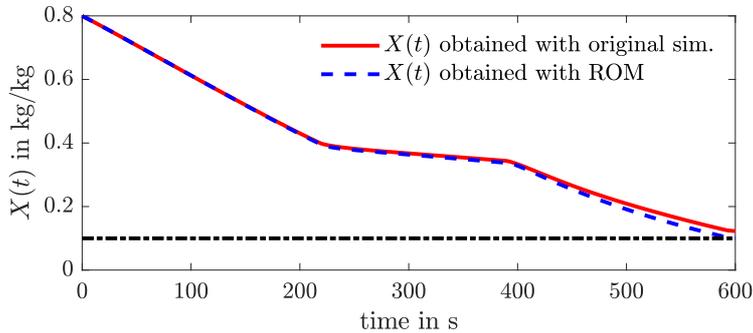}
               \caption{Total moisture $X(t)$ for the optimal drying of wood chips obtained by applying the optimal input sequence to the ROM ($n= 6$, dashed blue line) and to the simulation of the finite-volume model~\eqref{eqn:NonlinSysAllg} (solid red line). Small deviations occur at the end of the drying process. The target value $X_\mathrm{f}=1\times 10^{-1}\nicefrac{\mathrm{kg}}{\mathrm{kg}}$ that is enforced by the terminal constraint is marked by the dashed line.}
        \label{fig:TotalWaterCmpOCP}
\end{figure}

\subsection{Reduced order model study}\label{subsec:OCPstudyDifferentROM}

Choosing the number of modes $n$ obviously involves a trade-off between the degree of reduction and the approximation accuracy. 
We analyze this trade-off by comparing optimal control results obtained from ROM of orders $n= 6, 10, 34$. We choose $n=6$, since it results in the smallest ROM with acceptable approximations for temperature and moisture, and $n=34$, since it is the largest controllable ROM according to section \ref{subsec:ComparisonOfROMs}. The order $n= 10$ is an arbitrary intermediate value. 

We solve the optimal control problem for $n=6,\,10,\,34$ with the same conditions as stated in section \ref{subsec:OCPWoodChip}. 
Computation times and cost function values are listed in table \ref{tb:CompTimeComparison}.
The optimal controls and the resulting total moistures are shown in Figure~\ref{fig:OCPTrajecCmpDiffOrdr}.  
All optimal controls are of bang-bang type with two heating and two resting periods. The switching points nearly coincide for all $n$. 
The total moistures that result from applying the optimal controls to the finite-volume model~\eqref{eqn:NonlinSysAllg}, which are shown in Figure~\ref{fig:TotalWaterCmpDiffOrdr}, nearly coincide. While the deviations at the end of the drying process get smaller as the ROM order is increased, the deviation for $n= 6$ is already acceptable as discussed at the end of section~\ref{subsec:OCPWoodChip}. 

\begin{figure}[t]
        \centering
                       \includegraphics[width=0.7\textwidth]{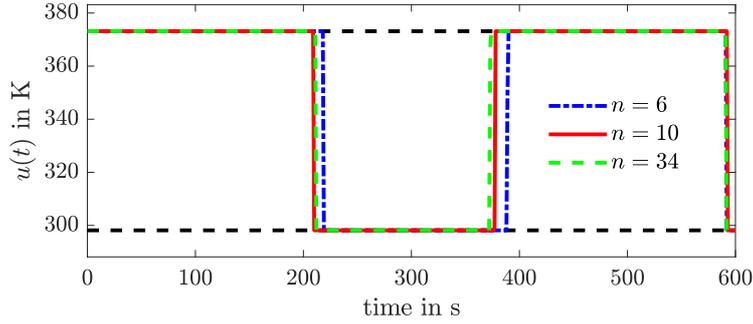}
               \caption{Optimal control trajectories obtained with ROM of orders $n=6, 10, 34$.}
        \label{fig:OCPTrajecCmpDiffOrdr}
\end{figure}

\begin{figure}[t]
        \centering
                       \includegraphics[width=0.7\textwidth]{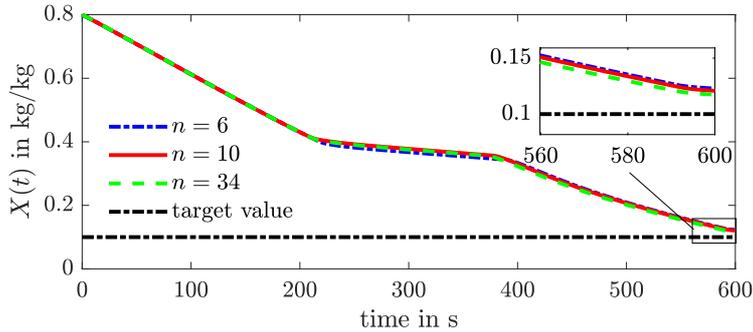}
               \caption{Total moisture $X(t)$ obtained from simulations of the original model~\eqref{eqn:PDE} with the optimal input sequence obtained with the ROM with $n=6, 10, 34$.}
        \label{fig:TotalWaterCmpDiffOrdr}
\end{figure}

\begin{table}[b]
\begin{center}
\caption{Computation times for the optimal control problem for ROM of different order}\label{tb:CompTimeComparison}
\begin{tabular}{lll}
ROM order & computation time & cost function values\\ \hline
$n=6$ & $\unit[12858]{s}$ & $J=31627$ \\
$n=10$ & $\unit[16316]{s}$ & $J=31875$\\
$n=34$ & $\unit[36936]{s}$ & $J=32319$
\end{tabular}
\end{center}
\end{table}

\section{Conclusion}\label{sec:Outlook}
We used POD and Galerkin-based model reduction to obtain a ROM for the drying of wood chips. Specifically, a ROM of order six proved to be appropriate to approximate the coupled heat and moisture diffusion. We used the model for a nonlinear controllability analysis of the drying process. 
The eigenvalues of the empirical controllability Gramian were used as a controllability measure. We showed that the ROM of order six is controllable and that its states yield a reasonable approximation of the controllable subspace of the drying process. Furthermore, the model proved to be sufficiently accurate and computationally efficient to allow solving optimal control problems for the energy-efficient operation of the drying process. We demonstrated new modes of operation for drying processes can easily be explored with optimal control problems, once a ROM is available.

\appendix

\section*{Appendix A} \label{sec:Apx:WoodChipModel}
The volumetric heat capacity $s$ and diffusion coefficients $\lambda$ and $\delta$ in the PDEs \eqref{eqn:PDE} depend on the local temperature $T$ and moisture $x$. They read
\begin{align*}
s(x) &= \rho_\text{d}\big(1+x\big) \frac{c_\mathrm{p,d}+x\,c_\text{p,w}}{1+x}\\
\lambda(x) &= \mathrm{diag}\big( \lambda_\mathrm{x}(x),\lambda_\mathrm{y}(x),\lambda_\mathrm{z}(x)\big)\\
\delta(T) &= \mathrm{diag}\big( \delta_\mathrm{x}(T) , \delta_\mathrm{y}(T) ,  \delta_\mathrm{z}(T) \big),
\end{align*}
where $\lambda(x)\in\R^{3 \times 3}$ and $\delta(T)\in\R^{3 \times 3}$ due to the anisotropy of the wood,
\begin{align*}
\lambda_i(x)&= \lambda_{\text{d},i}+\frac{x\,\lambda_\text{w} }{1+x} \\
\delta_i(T)&= \delta_{\text{d},i}\bigg(\frac{T}{293.15}\bigg)^{1.75}
\end{align*}
for $i\in\{\mathrm{x},\mathrm{y},\mathrm{z}\}$ 
\cite[Table 3]{Scherer2016}, and where all constants can be found in Table \ref{tb:AppxConstParam}.
\begin{table}[t]
\begin{center}
\caption{Parameters and conditions for the drying process of wood chips}\label{tb:AppxConstParam}
\begin{tabular}{lrl}
wood chip volume & $V$&$= \unit[1\cdot 10^{-6}]{m^3}$\\ 
density of dry wood & $\rho_\text{d} $&$= \unit[500]{\nicefrac{kg}{m^3}}$\\ 
heat capacity of dry wood & $c_\mathrm{p,d} $&$= \unit[1500]{\nicefrac{J}{kg\,K}} $\\ 
heat capacity of water & $c_\mathrm{p,w} $&$= \unit[4190]{\nicefrac{J}{kg\,K}} $\\ 
thermal conductivity of water & $\lambda_\mathrm{w} $&$= \unit[0.56]{\nicefrac{W}{m\,K}} $\\ 
thermal conductivity in fiber direction & $\lambda_{\mathrm{d,x}} $&$= \unit[1\cdot 10^{-7}]{\nicefrac{W}{m\,K}} $\\ 
thermal conductivity orth.\ to fiber direction & $\lambda_{\mathrm{d,y}} $&$=\lambda_{\mathrm{d,z}}  = \unit[2\cdot 10^{-9}]{\nicefrac{W}{m\,K}} $\\ 
mass diffusion coefficient in fiber direction  & $\delta_{\mathrm{d,x}} $&$= \unit[0.24]{\nicefrac{m^2}{s}} $\\ 
mass diffusion coefficient orth.\ to fiber direction & $\delta_{\mathrm{d,y}}$&$=\delta_{\mathrm{d,z}}= \unit[0.12]{\nicefrac{m^2}{s}} $\\ 
heat transfer coefficient &$\alpha$&$= \unit[45]{\nicefrac{W}{m^2\,K}} $\\
mass transfer coefficient&$\beta$&$= \unit[0.075]{\nicefrac{m}{s}} $\\
molar mass&$M_\mathrm{H_2O}$&$= \unit[18.01528\cdot 10^{-3}]{\nicefrac{kg}{mol}} $\\
gas constant&$R$&$= \unit[8.3144621]{\nicefrac{J}{mol\,K}} $\\
ambient humidity&$\rho_\infty$&$= \unit[0.007]{\nicefrac{kg}{m^3}} $
\end{tabular}
\end{center}
\end{table} 
The boundary conditions \eqref{eqn:PDEBC} depend on the absolute humidity on the surface \cite[eq. 3.43]{Sudbrock2014} 
\begin{align}
\rho(x,T) = M_\mathrm{H_2O}\frac{\varphi_\mathrm{s}(x,T) \cdot p_\text{v,sat}(T)}{R\cdot T},\label{eqn:IntroPDEAbsHumidity}
\end{align}
and the enthalpy of adsorption \cite[eq. 3.159]{Sudbrock2014}
\begin{align}
\Delta h_\text{ads}(x,T) = \Delta h_\text{v}(T) + \Delta h_\text{b}(x,T).\label{eqn:IntroPDEEnthalpyAds}
\end{align}
Due to the dependence on the local surface temperature $T$ and moisture $x$, \eqref{eqn:IntroPDEAbsHumidity} and \eqref{eqn:IntroPDEEnthalpyAds} are functions of location and time. 
The relative humidity $\varphi_\mathrm{s}$ (cf. \cite[eq.\,3.170]{Sudbrock2014}), the saturation vapor pressure $p_\text{v,sat}$ (cf. \cite{Buck1981,Kaempfer2012}), the evaporation enthalpy $\Delta h_\mathrm{v}$ (cf. \cite[eq.\,3.160]{Sudbrock2014}) and the bond enthalpy $\Delta h_\mathrm{b}$ (cf. \cite[eq.\,3.171]{Sudbrock2014}) read
\begin{align}
\varphi_\mathrm{s}(x,T) &= \left\{
    \begin{array}{ll}
      1-\Big( 1- \tfrac{x}{x_\text{fsp}(T)} \Big) ^{6.453 \cdot 10^{-3} \cdot T} & \mbox{for $x\leq x_\text{fsp}$}
      \\
      1 & \mbox{for $x>x_\text{fsp}$}.
    \end{array}
  \right.   \label{eqn:AppxRelHumidity}\\
p_\text{v,sat}(T) &= 611.21 \cdot \exp{ \Big( \big( 18.678-\tfrac{T - 273.15}{234.5} \big)  \tfrac{T-273.15}{T -16.01} \Big)}\label{eqn:AppxPvsat}\\
\Delta h_\text{v}(T) &= 3.1671\cdot 10^6 - 2433.2 \cdot T \label{eqn:AppxHv}\\
\Delta h_\text{b}(x,T) &= \left\{
    \begin{array}{ll}
      0.4 \cdot \Delta h_\text{v}(T) \cdot \Big( 1- \frac{x}{x_\mathrm{fsp}(T)} \Big)^2 & \mbox{for $x\leq x_\text{fsp}$}
      \\
      0 & \mbox{for $x>x_\text{fsp}$}
    \end{array}
  \right. \label{eqn:AppxHb}
\end{align} 
with the moisture at the fiber saturation point 
\begin{align}\label{eqn:AppxXfsp}
x_\text{fsp}(T) &= 0.598-0.001 \, T
\end{align}
\cite[eq. 3.169]{Sudbrock2014}. Since \eqref{eqn:AppxRelHumidity}-\eqref{eqn:AppxXfsp} are empirical functions, it remains to state their units. We have $[\varphi_\mathrm{s}]=1$, $[p_\text{v,sat}]=\mathrm{Pa}$, $[\Delta h_\mathrm{v} ]=\nicefrac{\mathrm{J}}{\mathrm{kg}}$, $[\Delta h_\mathrm{b}]=\nicefrac{\mathrm{J}}{\mathrm{kg}}$ and $[x_\text{fsp}]=\nicefrac{\mathrm{kg}}{\mathrm{kg}}$. 

\bibliographystyle{plain} 
\bibliography{BernerO2019}         

\begin{thebibliography}{10}

\bibitem{Berner2017}
M.~O. Berner, F.~Sudbrock, V.~Scherer, and M.~M\"onnigmann.
\newblock \uppercase{POD} and \uppercase{G}alerkin-based reduction of a wood
  chip drying model.
\newblock In {\em Proceedings of the 20th IFAC World Congress, Toulouse,
  France}, pages 6803--6807, 2017.

\bibitem{Buck1981}
A.L. Buck.
\newblock New equations for computing vapor pressure and enhancements factor.
\newblock {\em Journal of applied meteorology}, 20:1527--1532, 1981.

\bibitem{Cao2015}
X.~Cao and B.~Ayalew.
\newblock Estimation and predictive control of nonlinear diffusion processes
  with application to drying of coatings.
\newblock {\em Journal of Systems and Control Engineering}, 229:235--249, 2015.

\bibitem{Chen1999}
C.-T. Chen.
\newblock {\em Linear System Theory and Design}.
\newblock Oxford University Press, 3rd edition, 1999.

\bibitem{Cordier2008a}
L.~Cordier and M.~Bergmann.
\newblock Proper orthogonal decomposition: \uppercase{A}n overview.
\newblock In P.~Millan and M.L. Riethmuller, editors, {\em Post-Processing of
  numerical and experimental data}, pages 1--45. Von Karman Institute for Fluid
  Dynamics, 2008.

\bibitem{Cordier2008b}
L.~Cordier and M.~Bergmann.
\newblock Two typical applications of \uppercase{POD}: Coherent structures
  education and reduced order modelling.
\newblock In P.~Millan and M.L. Riethmuller, editors, {\em Post-Processing of
  numerical and experimental data}, pages 1--60. Von Karman Institute for Fluid
  Dynamics, 2008.

\bibitem{Delrattre2004}
C.~Delattre, D.~Dochain, and J.~Winkin.
\newblock Observability analysis of nonlinear tubular (bio)reactor models:
  \uppercase{A} case study.
\newblock {\em Journal of Process Control}, 14:661--669, 2004.

\bibitem{Eymard2000}
R.~Eymard, T.~Gallou\"{e}t, and R.~Herbin.
\newblock Finite volume methods.
\newblock {\em Handbook of Numerical Analysis}, 7:713--1018, 2000.

\bibitem{Fletcher1984}
C.~A.~J. Fletcher.
\newblock {\em Computational Galerkin Methods}.
\newblock Springer Series in Computational Physics. Springer, 1984.

\bibitem{Hahn2002}
J.~Hahn and T.~F. Edgar.
\newblock An improved method for nonlinear model reduction using balancing of
  empirical \uppercase{G}ramians.
\newblock {\em Computers and Chemical Engineering}, 26:1379--1397, 2002.

\bibitem{Hahn2003}
J.~Hahn, T.~F. Edgar, and W.~Marquardt.
\newblock Controllability and observability covariance matrices for the
  analysis and order reduction of stable nonlinear systems.
\newblock {\em Journal of Process Control}, 13:115--127, 2003.

\bibitem{Jansen2017}
J.~D. Jansen and L.~J. Durlofsky.
\newblock Use of reduced-order models in well control optimization.
\newblock {\em Optimization and Engineering}, 18:105--132, 2017.

\bibitem{Kaempfer2012}
N.~K\"{a}mpfer.
\newblock {\em Monitoring Atmospheric Water Vapour}.
\newblock Springer, 2012.

\bibitem{Keiper2018}
W.~Keiper, A.~Milde, and S.~Volkwein, editors.
\newblock {\em Reduced-order modeling (ROM) for simulation and optimization}.
\newblock Springer, 2018.

\bibitem{Lall1999}
S.~Lall, J.~E. Marsden, and S.~Glava\v{s}ki.
\newblock Empirical model reduction of controlled nonlinear systems.
\newblock In {\em IFAC Proceedings Volumes}, volume~32, pages 2598--2603, 1999.

\bibitem{Leon2002}
L.~Le\'{o}n and E.~Zuazua.
\newblock Boundary controllability of the finite-difference space
  semi-discretizations of the beam equation.
\newblock {\em ESAIM: Control, Optimisation and Calculus of Variations},
  8:827--862, 2002.

\bibitem{Levine2009}
J.~Levine.
\newblock {\em Analysis and Control of Nonlinear Systems: A Flatness-based
  Approach}.
\newblock Mathematical Engineering. Springer, 2009.

\bibitem{Maidi2015}
A.~Maidi and J.-P. Corriou.
\newblock Controllability of nonlinear diffusion system.
\newblock {\em The Canadian Journal of Chemical Engineering}, 93:427--431,
  2015.

\bibitem{Moore1981}
B.~C. Moore.
\newblock Principal component analysis in linear systems: Controllability,
  observability, and model reduction.
\newblock {\em IEEE Transactions on Automatic Control}, 26:17--32, 1981.

\bibitem{Moukalled2015}
F.~Moukalled, L.~Mangani, and M.~Darwish.
\newblock {\em The finite volume method in computational fluid dynamics}.
\newblock Fluid Mechanics and Its Applications. Springer, 2015.

\bibitem{Nagarajan2018}
K.K. Nagarajan, S.~Singha, L.~Cordier, and C.~Airiau.
\newblock Open-loop control of cavity noise using proper orthogonal
  decomposition reduced-order model.
\newblock {\em Computers \& Fluids}, 160:1--13, 2018.

\bibitem{Naidu2002}
D.S. Naidu.
\newblock {\em Optimal Control Systems}.
\newblock Electrical Engineering Series. Taylor \& Francis, 2002.

\bibitem{Rosier2009}
L.~Rosier.
\newblock Finite dimensional controllability.
\newblock In R.A. Meyers, editor, {\em Encyclopedia of Complexity and Systems
  Science}, pages 3524--3537. Springer, 2009.

\bibitem{Scherer2016}
V.~Scherer, M.~M\"onnigmann, M.~O. Berner, and F.~Sudbrock.
\newblock Coupled \uppercase{DEM}--\uppercase{CFD} simulation of drying wood
  chips in a rotary drum -- \uppercase{B}affle design and model reduction.
\newblock {\em Fuel}, 184:896--904, 2016.

\bibitem{Sirovich1987}
L.~Sirovich.
\newblock Turbulence and the dynamics of coherent structures, \uppercase{P}art
  \uppercase{I}-\uppercase{III}.
\newblock {\em Quarterly of Applied Mathematics}, 45(3):561--590, 1987.

\bibitem{Studinger2013}
A.~Studinger and S.~Volkwein.
\newblock Numerical analysis of \uppercase{POD} a-posteriori error estimation
  for optimal control.
\newblock In K.~Bredies, C.~Clason, K.~Kunisch, and G.~von Winckel, editors,
  {\em Control and Optimization with PDE Constraints}, pages 137--158.
  Springer, 2013.

\bibitem{Sudbrock2014}
F.~Sudbrock.
\newblock {\em \uppercase{DEM}/\uppercase{CFD} analysis for the convective
  drying of agitated beds}.
\newblock Ruhr-Universit\"at Bochum, PhD thesis, Shaker, (in German), 2015.

\bibitem{Sudbrock2015}
F.~Sudbrock, H.~Kruggel-Emden, S.~Wirtz, and V.~Scherer.
\newblock Convective drying of agitated silica gel and beech wood particle beds
  -- \uppercase{E}xperiments and transient \uppercase{DEM}-\uppercase{CFD}
  simulations.
\newblock {\em Drying Technology}, 33(15-16):1808--1820, 2015.

\end{thebibliography}

\end{document}